\documentclass[11pt,reqno,letter]{amsproc}

\usepackage{amsmath, amsthm, amssymb,color}


 \topmargin 0in
 \oddsidemargin 0in
 \evensidemargin 0in   
 \textwidth 6.5in
 \textheight 8.35in

\title[Analyticity for the second-grade fluids]{Analyticity and Gevrey-class regularity for the second-grade fluid equations}
\author{Marius Paicu}
\author{Vlad Vicol}

\address{Université Paris-Sud, Laboratoire de Mathématiques, 91405 Orsay Cedex, France }
\email{marius.paicu@math.u-psud.fr}

\address{Department of Mathematics, University of Southern California, Los Angeles, CA 90089, USA}
\email{vicol@usc.edu}

\theoremstyle{plain}
\newtheorem{theorem}{Theorem}[section]

\newtheorem{lemma}[theorem]{Lemma}

\theoremstyle{definition}

\newcommand\llabel[1]{\label{#1}}
\renewcommand{\hat}{\widehat}
\def\T3{{\mathbb T}^3}
\def\Td{{\mathbb T}^d}
\def\Zd{{\mathbb Z}^d}
\def\Fw{\hat{\omega}}
\def\Lms{\Lambda_{m}^{1/s}}
\def\LLs{\Lambda^{1/s} }
\def\sgn{\mathop{\rm sgn} \nolimits}
\def\curl{\mathop{\rm curl} \nolimits}
\def\ddiv{\mathop{\rm div} \nolimits}
\def\rot{\mathop{\rm curl} \nolimits}
\def\LL{\Lambda}
\def\Fu{\hat{u}}
\def\aalpha{\alpha^2}
\def\sss{\zeta}

\begin{document}


\begin{abstract}
We address the global persistence of analyticity and Gevrey-class regularity of solutions to the two and three-dimensional visco-elastic second-grade fluid equations. We obtain an explicit novel lower bound on the radius of analyticity of the solutions to the second-grade fluid equations that does not vanish as $t\rightarrow \infty$. Applications to the damped Euler equations are given.
\end{abstract}


\subjclass[2000]{76B03,35L60}

\keywords{Second grade fluids, global well-posedness, Gevrey class, analyticity radius}

\maketitle
\tableofcontents

\section{Introduction}\label{sec:intro}\setcounter{equation}{0}
In this paper we address the regularity of an asymptotically smooth system
arising in non-Newtonian fluid mechanics,
which is not smoothing in finite time, but admits a compact global
attractor (in the two-dimensional case). More precisely, we consider the system of visco-elastic second-grade fluids
\begin{align}
  &\partial_t ( u - \aalpha \Delta u) - \nu \Delta u + \curl(u-\aalpha \Delta u) \times u + \nabla p = 0,\llabel{eq:u1}\\
  & \ddiv u =0, \llabel{eq:u2}\\
  & u(0,x) = u_0(x), \llabel{eq:u3}
\end{align}
 where $\alpha>0$ is a material parameter, $\nu \geq 0$ is the kinematic viscosity, the vector field $u$ represents the velocity of the fluid, and the scalar field $p$ represents the pressure. Here $(x,t)\in \Td\times[0,\infty)$, where $\Td = [0, 2\pi]^d$ is the $d$-dimensional torus, and $d\in \{2,3\}$. Without loss of generality we consider velocities that have zero-mean on $\Td$.

Fluids of second-grade are a particular class of non-Newtonian
Rivlin-Ericksen fluids of differential type and the above precise
form has been justified by Dunn and Fosdick \cite{DF}.
The local existence in time, and the uniqueness of strong solutions of the equations \eqref{eq:u1}--\eqref{eq:u3} in a two or three-dimensional bounded domain with no slip boundary conditions has been addressed by Cioranescu and Ouazar \cite{CiO}. Moreover, in the two-dimensional case, they obtained the global in time existence of solutions (see also \cite{CiG,GaSe,GGS,If02}). Moise, Rosa, and Wang \cite{MoRoWa98} have shown later that in two dimensions these equations admit a compact global attractor $\mathcal A_{\alpha}$ (see also \cite{GT87,PRR}). The question of regularity and finite-dimensional behavior of $\mathcal A_{\alpha}$ was studied by Paicu, Raugel, and Rekalo in \cite{PRR}, where it was shown
that the compact global attractor in $H^3({\mathbb T}^2)$ is contained in any Sobolev space $H^m({\mathbb T}^2)$ provided that the material coefficient $\alpha$ is small enough, and the forcing term is regular. Moreover, on the global attractor, the second-grade fluid system can be reduced to a finite-dimensional system of ordinary differential equations with an infinite delay. As a consequence, the existence of a finite number of determining modes for the equation of fluids of grade two was established in \cite{PRR}.

Note that the equations \eqref{eq:u1}--\eqref{eq:u3} essentially differ from the $\alpha$-Navier-Stokes system (cf.~ Foias, Holm, and Titi \cite{Titi1,FHT2}, and references
therein). Indeed, the $\alpha$-Navier-Stokes model (cf.~\cite{FHT2}) contains the very regularizing term $-\nu\Delta (u-\aalpha\Delta u)$, instead of $-\nu\Delta u$, and thus is a
semi-linear problem. This is not the case for the second-grade fluid equations
where the dissipative term is very weak --- it behaves like a damping term --- and the system is not smoothing in finite time. The $\alpha$-models are used, in particular, as an alternative to the usual Navier-Stokes for numerical modeling of turbulence phenomena in pipes and channels. Note that the physics underlying the second-grade fluid equations and the $\alpha$-models
are quite different. There are numerous papers devoted to the asymptotic behavior of the $\alpha$-models, including Camassa-Holm equations, $\alpha$-Navier-Stokes equations, $\alpha$-Bardina equations (cf.~\cite{Titi3,Titi1,FHT2,LaTiti,LiTiti}).

In this paper we characterize the domain of analyticity and Gevrey-class regularity of solutions to the second-grade fluids equation, and of the Euler equation with damping term. We emphasize that the radius of analyticity gives an estimate on the minimal scale in the flow \cite{HKR,Ku}, and it also gives the explicit rate of exponential decay of its Fourier coefficients \cite{FT}. We recall also that the system of second-grade fluids has a unique strong solution $u\in
L^{\infty}_{loc}([0,\infty);H^3)$ in two-dimensional setting (cf.~\cite{CiO}). Thus, opposite to the Navier-Stokes equations, the system of second-grade fluids cannot be smoothing in finite time.

We prove that if the initial data $u_0$ is of Gevrey-class $s$, with $s \geq 1$, then the unique smooth solution $u(t)$ remains of Gevrey-class $s$ for all $t<T_*$, where $T_*\in(0,\infty]$ is the maximal time of existence in the Sobolev norm of the solution. Moreover, for all $\nu \geq 0$ we obtain an explicit lower bound for the real-analyticity radius of the solution, that depends algebraically $\int_{0}^{t} \Vert \nabla u(s) \Vert_{L^\infty} ds$. A similar lower bound on the analyticity radius for solutions to the incompressible Euler equations was obtained by Kukavica and Vicol \cite{KV,KV1} (see also \cite{AM,BBe,BBeZ,Be,LO}). The proof is based on the method of Gevrey-class regularity introduced by Foias and Temam \cite{FT} to study the analyticity of the Navier-Stokes equations (see also \cite{Chemin2,FTi,KV,LaTiti,Lem,LO,OTi98,OTi}). We emphasize that the technique of analytic estimates may be used to obtain the existence of global solutions for the Navier-Stokes equation with some type of large initial data (\cite{CGP,PZ}).

Note that if $\nu>0$, and $d=2$, or if $d=3$ and $u_0$ is small in a certain norm, then $T_*=\infty$, both for the second-grade fluids \eqref{eq:u1}--\eqref{eq:u3}, and for the damped Euler equations \eqref{eq:E1}--\eqref{eq:E3}. The novelty of our result is that in this case the lower bound on the radius of analyticity does not vanish as $t\rightarrow \infty$. Instead, it is bounded from below for all time by a positive quantity that depends solely on $\nu,\alpha$, the analytic norm, and the radius of analyticity of the initial data. In contrast, we note that the shear flow example of Bardos and Titi \cite{BTi} (cf.~\cite{DM}) may be used to construct explicit solutions to the incompressible two and three-dimensional Euler equations (in the absence of damping) whose radius of analyticity is decaying for all time, and hence vanishes as $t\rightarrow \infty$.

The main results of our paper are given bellow (for the definitions see the following sections).
\begin{theorem}\emph{(The three-dimensional case)}
  Fix $\nu ,\alpha > 0$, and assume that $\omega_0$ is of Gevrey-class $s$, for some $s\geq 1$. Then the unique solution $\omega(t) \in C( [0,T^*);L^2(\T3))$ to \eqref{eq:w1}--\eqref{eq:w3} is of Gevrey-class $s$ for all $t<T^*$, where $T^*\in(0,\infty]$ is the maximal time of existence of the Sobolev solution. Moreover, the radius $\tau(t)$ of Gevrey-class $s$ regularity of the solution is bounded from below as
  \begin{align*}
    \tau(t) \geq \frac{\tau_0}{C_0} e^{- C \int_{0}^{t} \Vert \nabla u(s) \Vert_{L^\infty} ds},
  \end{align*}
  where $C>0$ is a dimensional constant, and $C_0>0$ has additional explicit dependence on the initial data, $\alpha$, and $\nu$ via \eqref{eq:C0def*} below.
\end{theorem}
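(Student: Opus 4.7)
The plan is to follow the Foias--Temam scheme \cite{FT} for Gevrey-class regularity, adapted to the weakly dissipative second-grade fluid system. In the three-dimensional case I would work with the vorticity-type variable $\omega = \curl(u-\aalpha\Delta u)$. A direct computation, using the identity $-\nu\Delta(I-\aalpha\Delta)^{-1} = (\nu/\aalpha)I - (\nu/\aalpha)(I-\aalpha\Delta)^{-1}$, shows that $\omega$ satisfies a damped transport equation
\begin{align*}
\partial_t \omega + (u\cdot\nabla)\omega - (\omega\cdot\nabla)u + \frac{\nu}{\aalpha}\omega = R(u,\omega),
\end{align*}
where the remainder $R$ is smoothing of order $-2$ in $\omega$. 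This already reveals the two essential features to reckon with: a \emph{damping} of rate $\nu/\aalpha$ (not a parabolic smoothing) and a transport term controlled by $\|\nabla u\|_{L^\infty}$.

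\emph{Step 1 (Energy identity).} Applying $e^{\tau(t)\LLs}$ to the $\omega$-equation and pairing with $e^{\tau(t)\LLs}\omega$ in $L^2$, using $\partial_t e^{\tau\LLs}=\dot\tau\,\LLs e^{\tau\LLs}$ and the divergence-free condition on $u$, yields
\begin{align*}
\frac{1}{2}\frac{d}{dt}\|e^{\tau\LLs}\omega\|_{L^2}^2 + \frac{\nu}{\aalpha}\|e^{\tau\LLs}\omega\|_{L^2}^2 = \dot\tau\,\|\LL^{1/(2s)} e^{\tau\LLs}\omega\|_{L^2}^2 + \mathcal N(\omega,u),
\end{align*}
where $\mathcal N$ collects the nonlinear terms and the contribution of $R$.

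\emph{Step 2 (Gevrey product bound).} The key analytic ingredient, valid for $s\geq 1$ by subadditivity of $|\xi|^{1/s}$ (cf.~\cite{FTi,KV,LO}), is the inequality
\begin{align*}
|\mathcal N(\omega,u)| \leq C\|\nabla u\|_{L^\infty}\|e^{\tau\LLs}\omega\|_{L^2}^2 + C\,\tau\,\|\nabla u\|_{L^\infty}\|\LL^{1/(2s)} e^{\tau\LLs}\omega\|_{L^2}^2 + \frac{\nu}{2\aalpha}\|e^{\tau\LLs}\omega\|_{L^2}^2,
\end{align*}
in which the last term absorbs the smoothing remainder $R$ via the damping, and the second term---costing only a half-fractional derivative---is what must be killed by the $\dot\tau$-term.

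\emph{Step 3 (Choice of $\tau$).} Require
\begin{align*}
\dot\tau(t) + C\|\nabla u(t)\|_{L^\infty}\tau(t) = 0, \qquad \tau(0) = \tau_0/C_0,
\end{align*}
where $C_0>0$ is the constant of \eqref{eq:C0def*}, chosen in terms of $\|e^{\tau_0\LLs}\omega_0\|_{L^2}$, $\aalpha$, and $\nu$ so that the fractional-derivative term cancels in the energy identity. Solving the ODE gives $\tau(t)=(\tau_0/C_0)\exp(-C\int_0^t\|\nabla u(s)\|_{L^\infty}\,ds)$, i.e., the claimed lower bound.

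\emph{Step 4 (Closing the estimate).} With this choice of $\tau$, the energy identity reduces to
\begin{align*}
\frac{d}{dt}\|e^{\tau\LLs}\omega\|_{L^2}^2 + \frac{\nu}{\aalpha}\|e^{\tau\LLs}\omega\|_{L^2}^2 \leq C\|\nabla u\|_{L^\infty}\|e^{\tau\LLs}\omega\|_{L^2}^2,
\end{align*}
and Gr\"onwall's inequality then yields a finite Gevrey norm on any interval where $\int_0^t\|\nabla u\|_{L^\infty}\,ds < \infty$, i.e., on $[0,T^*)$. Gevrey regularity is transferred from $\omega$ to $u$ through the elliptic isomorphism $(I - \aalpha\Delta)u = \curl^{-1}\omega$ on mean-zero Gevrey spaces.

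\emph{Main obstacle.} The central difficulty is the lack of parabolic smoothing: unlike for Navier--Stokes, no dissipation term of the form $\nu\|\LL e^{\tau\LLs}\omega\|_{L^2}^2$ is available to absorb the derivative loss produced by $\dot\tau$. Hence the product estimate of Step 2 must be sharp enough to leak only a half-derivative, which is exactly what the condition $s\geq 1$ enables. Moreover, the uniform-in-time character of the lower bound when $T^*=\infty$ relies crucially on the damping $\nu/\aalpha$, which simultaneously absorbs the smoothing remainder $R$ and provides the coercivity needed for Gr\"onwall to yield a bound that does not blow up as $t\to\infty$.
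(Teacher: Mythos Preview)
Your Step~2 product estimate is the gap. With the standard isotropic weight $e^{\tau\LLs}$, the commutator $[e^{\tau\LLs},u\cdot\nabla]$ does \emph{not} produce a coefficient $\|\nabla u\|_{L^\infty}$ in front of the half-derivative term. What one actually gets (this is the paper's Lemma~\ref{lemma:new}, and the Levermore--Oliver estimate you cite) is
\[
|(u\cdot\nabla\omega,e^{2\tau\LLs}\omega)| \;\leq\; \frac{C\tau}{\alpha}\,\|e^{\tau\LLs}\omega\|_{L^2}\,\|\LL^{1/2s}e^{\tau\LLs}\omega\|_{L^2}^2,
\]
so the $\dot\tau$-balance forces $\dot\tau + (C\tau/\alpha)\|e^{\tau\LLs}\omega\|_{L^2}\leq 0$, and the resulting lower bound on $\tau$ decays exponentially in the \emph{Gevrey norm of $\omega$}, not in $\int_0^t\|\nabla u\|_{L^\infty}$. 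The obstruction is that the Fourier-side commutator bound involves $\sum_j |j|\,|\hat u_j|\,e^{\tau|j|^{1/s}}$, which carries the exponential weight on $u$ and cannot be turned into $\|\nabla u\|_{L^\infty}$ by H\"older alone. The paper (following Kukavica--Vicol) resolves this by working with the \emph{directional} norms $\|\omega\|_{X_{s,\tau}}^2=\sum_m\|\Lambda_m e^{\tau\Lambda_m^{1/s}}\omega\|_{L^2}^2$ and exploiting the one-dimensional identity $|l_m|-|k_m|=j_m\,\mathrm{sgn}(k_m)+\text{(remainder on }|k_m|\leq|j_m|)$, which allows one piece of the commutator to be inverted back to physical space as $\partial_m u\cdot\nabla H_m(\,\cdot\,)$ and bounded by $\|\nabla u\|_{L^\infty}$.

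This mechanism also explains where $C_0$ really comes from. The KV commutator splitting leaves residual terms that are controlled only by $\tau^2\|\omega\|_{H^1}/\alpha$ and $\tau^2\|\omega\|_{X_{s,\tau}}/\alpha$, so the ODE for $\tau$ is genuinely nonlinear:
\[
\dot\tau + C\tau\|\nabla u\|_{L^\infty} + \frac{C\tau^2}{\alpha}\bigl(\|\omega\|_{H^1}+\|\omega\|_{X_{s,\tau}}\bigr) \leq 0.
\]
Solving this Bernoulli-type ODE and bounding the time integrals of the $\tau^2$-coefficients (using the damping $\nu/(1+\aalpha)$ and the Sobolev growth bound \eqref{eq:sobolev:growth:bound}) is what produces the algebraic prefactor $1/C_0$ in \eqref{eq:C0def*}. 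Your introduction of $C_0$ as an ad hoc initial scaling $\tau(0)=\tau_0/C_0$ misses this structure; with your linear ODE there is no reason for any $C_0\neq 1$ to appear.
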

In the two dimensional case we obtain the global in time control of the radius of analyticity, which is moreover uniform in $\alpha$. This allows us to prove the convergence as $\alpha \to 0$ of the solutions of the second-grade fluid to solutions of the corresponding Navier-Stokes equations in analytic norms (cf.~Section~\ref{sec:NSEconv}). The convergence of solutions to the Euler-$\alpha$ equations to the corresponding Euler equations, in the limit $\alpha \to 0$, has been addressed in \cite{LiTiti}.
\begin{theorem}\emph{(The two-dimensional case)}
Fix $\nu> 0$, $0\leq \alpha < 1$, and assume that $u_0$ is of Gevrey-class $s$ for some $s\geq 1$, with radius $\tau_0 > 0$. Then there exists a unique global in time Gevrey-class $s$ solution $u(t)$ to \eqref{eq:u1}--\eqref{eq:u3}, such that for all $t\geq 0$ the radius of Gevrey-class regularity is bounded from below by
   \begin{align*}
    \tau(t) \geq \frac{\tau_0}{1+C_0\tau_0},
   \end{align*}where $C_0>0$ is a constant depending on $\nu$ and the initial data via \eqref{eq:C0def} below.
\end{theorem}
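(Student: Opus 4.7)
My plan is to adapt the Foias--Temam Gevrey-class energy method to the two-dimensional second-grade fluid in vorticity form. Setting $v=u-\alpha^2\Delta u$ and using $-\nu\Delta u=(\nu/\alpha^2)(v-u)$, the momentum equation becomes
\begin{equation*}
\partial_t v+\frac{\nu}{\alpha^2}(v-u)+\mathrm{curl}(v)\times u+\nabla p=0,
\end{equation*}
and in 2D, taking curl and setting $q=\mathrm{curl}\,v$, $\omega=\mathrm{curl}\,u$, this collapses to the damped transport equation
\begin{equation*}
\partial_t q+u\cdot\nabla q+\frac{\nu}{\alpha^2}(q-\omega)=0,
\end{equation*}
with $u$ recovered from $q$ via the Biot--Savart law composed with $(I-\alpha^2\Delta)^{-1}$. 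The 2D global Sobolev theory cited in the introduction (Cioranescu--Ouazar, Paicu--Raugel--Rekalo) provides the time-uniform $L^2$-control on $q$ and the time-integrated control on $\|q-\omega\|$ that I will exploit to close the Gevrey estimate.

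For a radius $\tau(t)>0$ to be chosen, define the Gevrey energy
\begin{equation*}
Y(t):=\|e^{\tau(t)\Lambda^{1/s}}q(t)\|_{L^2}^2,
\end{equation*}
possibly with an $\alpha^2\|\Lambda e^{\tau\Lambda^{1/s}}q\|^2$ companion so that the natural $H^1$-norm of $v$ is captured. Differentiating in time and using the divergence-freeness of $u$ to recast $\langle e^{\tau\Lambda^{1/s}}q,\,e^{\tau\Lambda^{1/s}}(u\cdot\nabla q)\rangle$ as a commutator, I arrive at
\begin{equation*}
\frac{1}{2}\dot Y+\frac{\nu}{\alpha^2}\|e^{\tau\Lambda^{1/s}}q\|^2\leq\dot\tau\,\|\Lambda^{1/(2s)}e^{\tau\Lambda^{1/s}}q\|^2+\frac{\nu}{\alpha^2}\|e^{\tau\Lambda^{1/s}}q\|\,\|e^{\tau\Lambda^{1/s}}\omega\|+|\mathcal{N}|,
\end{equation*}
where the key nonlinear bound is a Foias--Temam-type commutator estimate
\begin{equation*}
|\mathcal{N}|\leq C\,\|e^{\tau\Lambda^{1/s}}u\|_{H^{r}}\,\|\Lambda^{1/(2s)}e^{\tau\Lambda^{1/s}}q\|^{2},\qquad r>\tfrac{d}{2}+\tfrac{1}{2s},
\end{equation*}
resting on the algebra property of the relevant Gevrey space; the high-norm of $u$ is in turn controlled by $Y^{1/2}$ thanks to the smoothing supplied by $(I-\alpha^2\Delta)^{-1}$.

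The closure proceeds by a continuity/bootstrap argument: choose $\tau(t)$ by a differential equation of the form $\dot\tau=-C\tau^{2}F(Y)$, which absorbs $\mathcal{N}$ into the $\dot\tau$-slot and leaves the purely dissipative inequality $\dot Y+(2\nu/\alpha^{2})Y\lesssim (\nu/\alpha^{2})\|e^{\tau\Lambda^{1/s}}\omega\|\,Y^{1/2}$. Combined with the 2D Sobolev a priori bounds, this shows that $Y(t)$ remains bounded uniformly in time by a constant depending only on $\nu$, $\alpha$, and the analytic norm of $u_0$, and in particular $\int_{0}^{\infty}F(Y(t))\,dt\leq C$. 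Since $d(\tau^{-1})/dt=-\dot\tau/\tau^{2}=CF(Y)$, integration yields $\tau^{-1}(t)\leq\tau_{0}^{-1}+C_{0}$, that is, $\tau(t)\geq\tau_{0}/(1+C_{0}\tau_{0})$ for all $t\ge0$. Uniqueness is inherited from the Sobolev well-posedness theory.

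The main obstacle is producing a lower bound on $\tau$ that does not vanish as $t\to\infty$. Since $-\nu\Delta u$ acts only as damping on $v$ and provides no parabolic smoothing, the Gevrey radius cannot grow in time, and the entire non-vanishing lower bound hinges on establishing $F(Y)\in L^{1}_{t}$ with a time-independent constant that is, crucially, uniform in $\alpha\in[0,1)$ --- the latter being necessary for the $\alpha\to 0$ limit to Navier--Stokes addressed in Section~\ref{sec:NSEconv}. Weaving the weak damping of the vorticity equation through the commutator estimate in the Gevrey scale is the technical core of the argument.
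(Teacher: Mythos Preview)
Your framework---running the Foias--Temam Gevrey energy on $q=\curl(u-\alpha^{2}\Delta u)$ and recovering $u$ through the $(I-\alpha^{2}\Delta)^{-1}$ smoothing---is precisely what the paper does in Theorem~\ref{thm:2d:big} for \emph{large} $\alpha$, and the paper explicitly records that the resulting lower bound $\tau(t)\geq\tau_{0}\exp\bigl(-C(1+\alpha^{2})M_{0}/(\alpha\nu)\bigr)$ collapses as $\alpha\to 0$. The theorem you are asked to prove is the companion result for $0\leq\alpha<1$, and here your scheme has a structural gap: in the $q$-equation the term $-\nu\Delta u$ manifests as $\nu\mathcal{R}_{\alpha}q$ with symbol $\nu|k|^{2}/(1+\alpha^{2}|k|^{2})$, i.e.\ a \emph{damping} bounded above by $\nu/\alpha^{2}$ but providing no $\alpha$-independent dissipation at high frequency. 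Simultaneously, the ``smoothing supplied by $(I-\alpha^{2}\Delta)^{-1}$'' that you invoke to control $\Vert e^{\tau\Lambda^{1/s}}u\Vert_{H^{r}}$ by $Y^{1/2}$ costs a factor $C/\alpha$ (cf.~\eqref{eq:Kestimate}). Both effects feed into your $F(Y)$, so the claimed bound $\int_{0}^{\infty}F(Y)\,dt\leq C$ cannot be made uniform in $\alpha\in[0,1)$, and the argument does not yield \eqref{eq:thm:2d:small}.

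The paper's proof avoids the $q$-equation altogether for this regime. It tests the \emph{velocity} equation \eqref{eq:u1} against $-e^{2\tau\Lambda}\curl\Delta u$ and works with the pair $Z=\Vert\Lambda e^{\tau\Lambda}\curl u\Vert_{L^{2}}^{2}$, $W=\Vert e^{\tau\Lambda}\curl\Delta u\Vert_{L^{2}}^{2}$; the point is that $-\nu\Delta u$ then contributes the genuine dissipation $\nu W$ with \emph{no} $\alpha$ prefactor. The commutator estimates (Lemma~\ref{lemma:2d:new:estimate}) are arranged, via a Young splitting against $\tfrac{\nu}{4}W$, to produce $\tau^{2}$ (not $\tau$) in front of the highest-order terms, so that $\dot\tau+\tfrac{C\tau^{2}}{\nu}W=0$ absorbs them. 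The energy inequality then gives $\tfrac{\nu}{2}\int_{0}^{\infty}W\,ds\leq C_{0}\nu^{2}$ uniformly in $\alpha\leq 1$, and integrating the Riccati ODE yields $\tau^{-1}(t)\leq\tau_{0}^{-1}+C_{0}$. Your proposal has the right final ODE shape, but the mechanism that makes $F$ integrable in time---the $\alpha$-free viscous dissipation coming from testing at the velocity level---is missing; without it the $\tau^{2}$ structure in your $\dot\tau$ equation is also unjustified, since the bare Foias--Temam commutator gains only one power of $\tau$.
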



\section{Preliminaries}
In this section we introduce the notations that are used throughout the paper. We denote the usual Lebesgue spaces by $L^p(\Td)=L^p$, for $1\leq p \leq \infty$. The $L^2$-inner product is denoted by $\langle \cdot,\cdot\rangle$. The Sobolev spaces $H^r(\Td) = H^r$ of {\it mean-free functions} are classically characterized in terms of the Fourier series
\begin{align*}
  H^r(\Td) =\{ v(x) = \sum\limits_{k\in\Zd} \hat{v}_k e^{i k\cdot x}\, :\, \overline{\hat{v}_k} = \hat{v}_{-k},\ \hat{v}_0 = 0,\ \Vert v \Vert_{H^r}^2 = (2\pi)^3 \sum\limits_{k\in\Zd} (1+|k|^2)^r |\hat{v}_k|^2 <\infty\}.
\end{align*}
We let $\lambda_1>0$ be the first positive eigenvalue of the Stokes operator, which in the periodic setting coincides with $-\Delta$ \cite{CF,T}. For simplicity we consider $\Td = [0,2\pi]^d$, and hence $\lambda_1 = 1$. The Poincar\'e inequality then reads $\Vert v \Vert_{L^2}\leq \Vert \nabla v \Vert_{L^2}$ for all $v\in H^1$. Throughout the paper we shall denote by $\Lambda$ the operator $(-\Delta)^{1/2}$, i.e., the Fourier multiplier operator with symbol $|k|$. We will denote by $C$ a generic sufficiently large positive dimensional constant, which does not depend on $\alpha,\nu$. Moreover, the curl of a vector field $v$ will be denoted by $\curl v =  \nabla \times v$.

\subsection{Dyadic decompositions and para-differential calculus}

Fix a smooth nonnegative radial function $\chi$ with support in the ball  $\{\vert\xi\vert\leq\frac{4}{3}\},$
which is identically $1$ in $\{\vert\xi\vert\leq\frac{3}{4}\},$
and such that the map $r\mapsto \chi(|r|)$ is non-increasing over ${\mathbb R}_+.$
Let  $\varphi(\xi)=\chi(\xi/2)-\chi(\xi).$
We classically have
\begin{equation}\label{eq:dyadique}
\sum_{q\in {\mathbb Z}}\varphi(2^{-q}\xi)=1\quad\hbox{for all}\quad
\xi\in {\mathbb R}^d\setminus\{0\}.
\end{equation}
We define the spectral localization operators
$\Delta_q$ and  $ S_q$ ($q\in {\mathbb Z}$) by
$$
\Delta_q\;u:=\varphi(2^{-q}D)u=\sum\limits_{k\in \Zd} \hat u(k)e^{ikx}\varphi(2^{-q}|k|)$$
and
 $$S_q\,u:=\chi(2^{-q}D)u=\sum\limits_{k\in \Zd}\hat u(k) e^{ikx}\chi(2^{-q}|k|).
$$
We have the following quasi-orthogonality
property~:
\begin{equation}\label{presortho}
\Delta_k\Delta_q u\equiv 0 \quad\mbox{if}\quad\vert k-q\vert\geq 2
\quad\mbox{and}\quad\Delta_k( S_{q-1}u\Delta_qv)\equiv
0\quad\mbox{if} \quad\vert k-q\vert\geq 5.
\end{equation}
We recall the very useful \emph{Bernstein inequality}.
\begin{lemma}  Let $n\in {\mathbb N},$ $1\leq p_1\leq p_2\leq\infty$ and
$\psi\in C_c^\infty({\mathbb R}^d).$ There exists a constant $C$
depending only on $n,d$ and ${\rm Supp}\,\psi$ such that
$$\hskip1cm\|D^n\psi(2^{-q}D)u\|_{L^{p_2}}
\leq C2^{q\bigl(n+N\bigl(\frac 1{p_{1}}-\frac
1{p_{2}}\bigr)\bigr)}\|\psi(2^{-q}D)u\|_{L^{p_1}},$$
and
$$C^{-1}2^{q\bigl(n+N\bigl(\frac 1{p_{1}}-\frac
1{p_{2}}\bigr)\bigr)}\|\varphi(2^{-q}D)u\|_{L^{p_1}}\leq\sup_{|\alpha|=n}\|\partial^\alpha\varphi(2^{-q}D)u\|_{L^{p_2}}
\leq C2^{q\bigl(n+N\bigl(\frac 1{p_{1}}-\frac
1{p_{2}}\bigr)\bigr)}\|\varphi(2^{-q}D)u\|_{L^{p_1}}.$$
\end{lemma}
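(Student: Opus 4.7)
The plan is to realise the multipliers $\psi(2^{-q}D)$ and $D^n\psi(2^{-q}D)$ as convolution operators against scaled Schwartz kernels, then invoke Young's convolution inequality to cross $L^p$ spaces. Setting $\check\psi := \mathcal{F}^{-1}\psi \in \mathcal{S}(\mathbb{R}^d)$, one has $\psi(2^{-q}D)u = h_q\ast u$ with $h_q(x) = 2^{qd}\check\psi(2^{q}x)$, and a change of variables gives
\begin{equation*}
\|D^n h_q\|_{L^r(\mathbb{R}^d)} \;=\; 2^{q(n + d(1-1/r))}\,\|D^n\check\psi\|_{L^r(\mathbb{R}^d)},
\end{equation*}
where the right-hand norm is finite and depends only on $n$, $d$, and $\mathrm{supp}\,\psi$.

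For the first inequality I would fix an auxiliary bump $\tilde\psi \in C_c^\infty$ with $\tilde\psi \equiv 1$ on $\mathrm{supp}\,\psi$, so that $D^n\psi(2^{-q}D) = D^n\tilde\psi(2^{-q}D)\,\psi(2^{-q}D)$, and then apply Young to the outer factor with Hölder exponent $r$ satisfying $1+1/p_2 = 1/r + 1/p_1$, equivalently $1-1/r = 1/p_1 - 1/p_2$. The condition $p_1 \leq p_2$ is exactly what makes $r \geq 1$ admissible, and the bound produces the advertised $2^{q(n+d(1/p_1-1/p_2))}$ factor at once. The right-hand bound in the second chain is then the special case $\psi = \varphi$, $D^n = \partial^\alpha$ with $|\alpha|=n$, followed by the supremum over $\alpha$.

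The principal obstacle is the reverse (left-hand) bound in the chain. Since $\varphi$ is supported in the annulus $\{3/4 \leq |\xi| \leq 8/3\}$, bounded away from the origin, I fix a companion bump $\tilde\varphi \in C_c^\infty(\mathbb{R}^d\setminus\{0\})$ with $\tilde\varphi \equiv 1$ on $\mathrm{supp}\,\varphi$, and use the multinomial identity $|\xi|^{2n} = \sum_{|\alpha|=n}\binom{n}{\alpha}\xi^{2\alpha}$ together with the smoothness of $|\xi|^{-2n}$ on $\mathrm{supp}\,\tilde\varphi$ to factor
\begin{equation*}
\tilde\varphi(\xi) \;=\; \sum_{|\alpha|=n}\psi_\alpha(\xi)\,\xi^\alpha, \qquad \psi_\alpha(\xi) := \binom{n}{\alpha}\,\frac{\xi^\alpha\tilde\varphi(\xi)}{|\xi|^{2n}} \in C_c^\infty(\mathbb{R}^d\setminus\{0\}).
\end{equation*}
Since $\tilde\varphi\varphi = \varphi$ and Fourier multipliers commute with $\partial^\alpha$, rescaling by $2^{-q}$ rearranges this identity into
\begin{equation*}
\varphi(2^{-q}D)u \;=\; 2^{-qn}(-i)^n\sum_{|\alpha|=n}\psi_\alpha(2^{-q}D)\bigl(\partial^\alpha\varphi(2^{-q}D)u\bigr),
\end{equation*}
which converts a derivative-free $L^{p_1}$ norm on the left into a sum of $n$-th derivative norms on the right. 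Bounding each summand by Young applied to the kernel of $\psi_\alpha(2^{-q}D)$, using that $\partial^\alpha\varphi(2^{-q}D)u$ is itself spectrally localised in an annulus of scale $2^{q}$, then delivers the desired $2^{-q(n+d(1/p_1-1/p_2))}$ factor. The annular support of $\varphi$ is precisely what makes $|\xi|^{-2n}$ a bona fide smooth symbol on $\mathrm{supp}\,\tilde\varphi$; this geometric input is the one ingredient of the proof that goes beyond dilation invariance and a single application of Young's convolution inequality.
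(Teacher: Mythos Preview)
The paper does not prove this lemma; it is merely recalled as the standard Bernstein inequality, so there is no argument in the paper to compare against. Your approach is the textbook one: realise the multiplier as convolution with a scaled Schwartz kernel and apply Young's inequality. For the first inequality and for the upper bound in the second chain this is entirely correct, and the use of an auxiliary bump $\tilde\psi\equiv 1$ on $\mathrm{supp}\,\psi$ is exactly what makes the constant depend only on $n,d$ and $\mathrm{supp}\,\psi$.

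There is, however, a genuine problem in your treatment of the lower bound. After the identity
\[
\varphi(2^{-q}D)u \;=\; 2^{-qn}(-i)^n\sum_{|\alpha|=n}\psi_\alpha(2^{-q}D)\bigl(\partial^\alpha\varphi(2^{-q}D)u\bigr),
\]
you claim Young's inequality on the kernel of $\psi_\alpha(2^{-q}D)$ yields the factor $2^{-qd(1/p_1-1/p_2)}$ when passing from the $L^{p_2}$ norm of $\partial^\alpha\varphi(2^{-q}D)u$ to the $L^{p_1}$ norm of $\varphi(2^{-q}D)u$. But Young requires $1+1/p_1 = 1/r + 1/p_2$, i.e.\ $1/r = 1 + 1/p_1 - 1/p_2 \geq 1$ since $p_1\leq p_2$; this forces $r=1$ and hence $p_1=p_2$. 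For $p_1<p_2$ the step simply fails, and in fact the inequality as stated in the paper is false in that regime: take $u$ to be a sum of $M$ well-separated translates of a fixed annulus-localised bump, so that $\|u\|_{L^{p_1}}/\|u\|_{L^{p_2}}\sim M^{1/p_1-1/p_2}\to\infty$. The standard formulation of the reverse Bernstein inequality carries the \emph{same} Lebesgue exponent on both sides, and your decomposition via $\psi_\alpha(\xi)=\binom{n}{\alpha}\xi^\alpha\tilde\varphi(\xi)/|\xi|^{2n}$ correctly proves exactly that version. The lower bound as printed in the paper appears to be a typographical slip; your argument is the right one for the correct statement.
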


In order to obtain optimal bounds on the nonlinear terms in a system, we use the paradifferential calculus,
a tool which was introduced by J.-M. Bony
in \cite{B}.
More precisely, the product of two functions
$f$ and $g$ may be decomposed according to
\begin{equation}\label{eq:decompobony}
fg= T_fg+ T_gf+ R(f,g)
\end{equation}
where the paraproduct operator $ T$ is defined by
the formula
$$\displaystyle{\, T_fg:=\sum_q S_{q\!-\!1}f\,\Delta_q g},$$
and the remainder operator, $ R,$ by
 $$\displaystyle{\, R(f,g)
:=\sum_q \Delta_qf\tilde\Delta_qg}
\quad\text{with}\quad\tilde\Delta_q:=\Delta_{q\!-\!1} +\Delta_q
+\Delta_{q\!+\!1}.$$

\subsection{Analytic and Gevrey-class norms} \label{sec:analytic:def}
Classically, a $C^\infty(\Td)$ function $v$ is in the {\it Gevrey-class} $s$, for some $s>0$ if there exist $M,\tau>0$ such that
\begin{align*}
  |\partial^\beta v (x)|\leq M \frac{\beta!^s}{\tau^{|\beta|}},
\end{align*}for all $x \in \Td$, and all multi-indices $\beta \in {\mathbb N}_{0}^{3}$. We will refer to $\tau$ as the {\it radius of Gevrey-class regularity} of the function $v$. When $s=1$ we recover the class of real-analytic functions, and the {\it radius of analyticity} $\tau$ is (up to a dimensional constant) the radius of convergence of the Taylor series at each point. When $s>1$ the Gevrey-classes consist of $C^\infty$ functions which however are not analytic. It is however more convenient in PDEs to use an equivalent characterization, introduced by Foias and Temam \cite{FT} to address the analyticity of solutions of the Navier-Stokes equations. Namely, for all $s\geq 1$ the Gevrey-class $s$ is given by
\begin{align*}
  \bigcup_{\tau>0} {\mathcal D}( \Lambda^r e^{\tau \Lambda^{1/s}} )
\end{align*}for any $r\geq 0$, where
\begin{align}
  \Vert \Lambda^r e^{\tau \Lambda^{1/s}} v \Vert_{L^2}^2 = (2\pi)^3 \sum\limits_{k\in\Zd} |k|^{2r} e^{2\tau |k|^{1/s}} |\hat{v}_k|^2.
\end{align}
See \cite{CF,FTi,FT,Ku,KV,KV1,LO,OTi,T} and references therein for more details on Gevrey-classes.

%

\subsection{Vorticity Formulation}\label{sec:vorticity}
It is convenient to consider the evolution of the vorticity $\omega$, which is defined as \begin{align}\llabel{eq:w:def}
\omega = \curl(u - \aalpha \Delta u) =  (I - \aalpha \Delta) \curl u.
 \end{align}It follows from \eqref{eq:u1}--\eqref{eq:u2}, that $\omega$ satisfies the initial value problem
\begin{align}
  &\partial_t \omega - \nu \Delta (I - \aalpha \Delta)^{-1} \omega + (u \cdot \nabla) \omega = (\omega \cdot \nabla) u, \llabel{eq:w1}\\
  & \ddiv \omega = 0, \llabel{eq:w2}\\
  & \omega(0,x) = \omega_0(x) = \curl (u_0 - \aalpha \Delta u_0) \llabel{eq:w3}
\end{align}on $\Td\times(0,\infty)$. Additionally, if $d=2$, $\omega$ is a scalar, and the right side of \eqref{eq:w1} is zero. Denote by ${\mathcal R}_\alpha$ the operator
\begin{align}
{\mathcal R}_\alpha = (-\Delta) (I -\aalpha \Delta)^{-1}.
\end{align}It follows from Plancherel's theorem, that for all $v\in L^2$ we have
\begin{align}
  \frac{1}{1+\alpha} \Vert v \Vert_{L^2} \leq \Vert {\mathcal R}_{\alpha} v \Vert_{L^2} \leq \frac{1}{\alpha} \Vert v \Vert_{L^2}. \label{eq:Restimate}
\end{align}The velocity is obtained from the vorticity by solving  the elliptic problem
\begin{align}
  \ddiv u =0,\ \curl u = (I - \aalpha \Delta)^{-1} \omega, \int_{\T3} u =0, \llabel{eq:u:def1}
\end{align}which in turn classically gives that
\begin{align}
  u = K\ast (I - \aalpha \Delta)^{-1} \omega = {\mathcal K}_\alpha \omega, \llabel{eq:u:def2}
\end{align} where $K$ is the periodic Biot-Savart kernel. Combined with \eqref{eq:Restimate}, the above implies that
\begin{align}
\Vert u \Vert_{H^3}\leq \frac{C}{\alpha} \Vert \omega \Vert_{L^2},\label{eq:Kestimate}
\end{align}for some universal constant $C>0$. Note that when $\alpha\rightarrow 0$ the above estimate becomes obsolete.


\section{The two-dimensional case}

\subsection{The case $\alpha$ large}\label{sec:2d:small}\setcounter{equation}{0}
In the two-dimensional case, the evolution equation \eqref{eq:w1} for $\omega$ does not include the term $\omega \cdot \nabla u$, which makes the problem tangible, in analogy to the two-dimensional Euler equations. The main result below gives the global well-posedness of solutions evolving from Gevrey-class data, whose radius $\tau(t)$ does not vanish as $t\rightarrow \infty$.
\begin{theorem}\label{thm:2d:big}
   Fix $\nu, \alpha> 0$, and assume that $\omega_0 \in {\mathcal D}(e^{\tau_0 \Lambda^{1/s}})$, for some $s\geq 1$, and $\tau_0 > 0$. Then there exists a unique global in time Gevrey-class $s$ solution $\omega(t)$ to \eqref{eq:w1}--\eqref{eq:w3}, such that for all $t\geq 0$ we have $\omega(t) \in {\mathcal D}(e^{\tau(t) \Lambda^{1/s}})$, and moreover we have the lower bound
   \begin{align}\llabel{eq:thm:2d:big}
    \tau(t) \geq \tau_0 e^{ -C M_0 \int_{0}^{t}  e^{-\nu s/(2+2\aalpha)} ds / \alpha} \geq \tau_0 e^{-C (2+2\aalpha) M_0 / (\alpha \nu) },
   \end{align}where $M_0 = \Vert e^{\tau_0\Lambda^{1/s}}\omega_0 \Vert_{L^2}$, and $C$ is a universal constant.
\end{theorem}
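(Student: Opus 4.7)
The plan is to derive an a priori estimate for the Gevrey norm $M(t) := \|e^{\tau(t)\Lambda^{1/s}}\omega(t)\|_{L^2}$ with a time-dependent radius $\tau(t)$, chosen so that the loss from $\dot\tau<0$ absorbs the transport nonlinearity and the system closes. Writing $M_1(t) := \|\Lambda^{1/(2s)}e^{\tau\Lambda^{1/s}}\omega(t)\|_{L^2}$, I first apply $e^{\tau\Lambda^{1/s}}$ to the 2D vorticity equation and pair with $e^{\tau\Lambda^{1/s}}\omega$ in $L^2$. Using that $\mathcal{R}_\alpha$ is a Fourier multiplier and $\ddiv u=0$ (so $\langle u\cdot\nabla f, f\rangle=0$), the energy identity reads
\begin{align*}
\tfrac12 \tfrac{d}{dt}M^2 - \dot\tau\, M_1^2 + \nu\,\bigl\langle\mathcal{R}_\alpha e^{\tau\Lambda^{1/s}}\omega,\, e^{\tau\Lambda^{1/s}}\omega\bigr\rangle = -\bigl\langle[e^{\tau\Lambda^{1/s}},\, u\cdot\nabla]\omega,\, e^{\tau\Lambda^{1/s}}\omega\bigr\rangle.
\end{align*}
The spectrum of $\mathcal{R}_\alpha$ on zero-mean functions lies in $[(1+\aalpha)^{-1},\,\alpha^{-2}]$, so the dissipation is bounded below by $\frac{\nu}{1+\aalpha}M^2$, in fact by \eqref{eq:Restimate}.

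The heart of the argument is a commutator estimate. Because $\ddiv u=0$, the nonlinearity appears only through $[e^{\tau\Lambda^{1/s}},\, u\cdot\nabla]$, whose symbol involves $e^{\tau|k|^{1/s}} - e^{\tau|j|^{1/s}}$. For $s\geq 1$, the subadditivity $|k|^{1/s}\leq |j|^{1/s}+|k-j|^{1/s}$ together with the mean value inequality
\begin{align*}
\bigl|e^{\tau|k|^{1/s}} - e^{\tau|j|^{1/s}}\bigr| \lesssim \tau\,|k-j|^{1/s}\,e^{\tau|j|^{1/s}}\,e^{\tau|k-j|^{1/s}}
\end{align*}
extracts a gain of $\tau$ from the commutator. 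I would then estimate each piece of the Bony decomposition $u\cdot\nabla\omega = T_u\nabla\omega + T_{\nabla\omega}u + R(u,\nabla\omega)$ by Bernstein's inequality applied to the dyadic blocks $\Delta_q$ and $S_q$, using the Biot--Savart bound derived from \eqref{eq:Restimate}--\eqref{eq:Kestimate} to trade analytic norms of $u$ for those of $\omega$ at the price of a factor $1/\alpha$. The target bound is
\begin{align*}
\bigl|\bigl\langle[e^{\tau\Lambda^{1/s}},\, u\cdot\nabla]\omega,\, e^{\tau\Lambda^{1/s}}\omega\bigr\rangle\bigr| \leq \tfrac{C\tau}{\alpha}\, M\, M_1^2,
\end{align*}
with exactly one $\Lambda^{1/(2s)}$-weight distributed onto each copy of $\omega$ on the right.

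With this bound in hand the estimate closes as a coupled pair of ODEs. Choosing $\dot\tau(t) = -\frac{C\,M(t)}{\alpha}\tau(t)$ makes the $M_1^2$ contributions cancel simultaneously, and the dissipation alone controls $M$: $\frac{d}{dt}M^2 + \frac{\nu}{1+\aalpha} M^2 \leq 0$, whence $M(t)\leq M_0\, e^{-\nu t/(2+2\aalpha)}$ (the factor $2$ comes from retaining only half the dissipation in the closing inequality). Integrating the logarithmic ODE for $\tau$ against this exponentially decaying input yields $\tau(t) \geq \tau_0 \exp\!\bigl(-\tfrac{CM_0}{\alpha}\int_0^t e^{-\nu s/(2+2\aalpha)}\,ds\bigr)$, which is the stated bound; bounding the integral by $(2+2\aalpha)/\nu$ gives the uniform-in-time lower bound. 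Global existence in Gevrey class follows from this a priori control, since the right-hand side of the ODE for $\tau$ is integrable on $[0,\infty)$, combined with the global $H^3$ well-posedness of Cioranescu--Ouazar. The formal manipulations are justified rigorously by truncating to a finite-dimensional Galerkin approximation in the Fourier basis, proving the estimate uniformly in the truncation, and passing to the limit.

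The main obstacle is the commutator estimate with the correct double gain $\tau/\alpha$. The $\tau$-factor uses the subadditivity of $|\cdot|^{1/s}$, which is precisely what forces $s\geq 1$; the $1/\alpha$ gain uses the smoothing of $\mathcal{K}_\alpha$ via $(I-\aalpha\Delta)^{-1}$. The main technical work lies in the paradifferential bookkeeping: the high--low, low--high, and remainder terms in Bony's decomposition have to be treated separately, each requiring Bernstein estimates applied to $\Delta_q(e^{\tau\Lambda^{1/s}}(\cdot))$ and a summability argument to recover the clean trilinear bound without losing regularity.
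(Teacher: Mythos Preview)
Your proposal is correct and follows the same architecture as the paper's proof: the energy identity, the dissipation lower bound via the symbol of $\mathcal{R}_\alpha$, the target commutator bound $\bigl|\langle[e^{\tau\Lambda^{1/s}},u\cdot\nabla]\omega,\,e^{\tau\Lambda^{1/s}}\omega\rangle\bigr|\le \frac{C\tau}{\alpha}M M_1^2$, the choice $\dot\tau=-\frac{C}{\alpha}M\tau$, and the integration against the exponentially decaying Gevrey norm are all exactly what the paper does. The only substantive difference is in how you propose to obtain the commutator bound: the paper (see Lemma~\ref{lemma:2d:convection}, proved along the lines of Section~\ref{app:new}) works directly in Fourier variables, using the mean-value/subadditivity inequality you wrote and then a Cauchy--Schwarz over the convolution sum together with $\sum_{j\neq 0}|j|^{-4}<\infty$ and $\|\Lambda^3 e^{\tau\Lambda^{1/s}}u\|_{L^2}\le C\alpha^{-1}\|e^{\tau\Lambda^{1/s}}\omega\|_{L^2}$; you propose instead to run Bony's decomposition with Bernstein on the dyadic blocks. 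Both routes yield the same trilinear estimate, but the direct Fourier argument is shorter here and avoids the paraproduct bookkeeping; the paradifferential machinery is used in the paper only for the Sobolev commutator in Section~\ref{app:sobolev:growth:bound}.
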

\begin{proof}[Proof of Theorem~\ref{thm:2d:big}]
  We take the $L^2$-inner product of $\partial_t \omega + \nu {\mathcal R}_{\alpha} \omega+ (u\cdot \nabla) \omega = 0$ with $e^{2\tau \LLs}$ and obtain
  \begin{align}
    \frac{1}{2} \frac{d}{dt} \Vert e^{\tau \LLs} \omega \Vert_{L^2}^2 - \dot{\tau} \Vert \LL^{1/2s} e^{\tau \LLs} \omega \Vert_{L^2}^2 + \langle e^{\tau \LLs} {\mathcal R}_{\alpha} \omega, e^{\tau \LLs} \omega \rangle = - \langle e^{\tau \LLs}( u \cdot \nabla \omega), e^{\tau \LLs} \omega \rangle.
  \end{align}Note that the Fourier multiplier symbol of the operator ${\mathcal R}_\alpha$ is an increasing function of $|k|\geq 1$, and therefore by Plancherel's theorem and Parseval's identity we have
\begin{align*}
  \langle e^{\tau \LLs} {\mathcal R}_\alpha \omega, \Lambda e^{\tau \LLs} \omega\rangle  &= (2\pi)^2 \sum\limits_{k\in {\mathbb Z}^2 \setminus \{ 0\}} \frac{|k|^2}{1+\aalpha |k|^2} |\Fw_k|^2 e^{2\tau |k|^{1/s}}\\
  &\geq \frac{(2\pi)^2}{1+\aalpha}  \sum\limits_{k\in {\mathbb Z}^2 \setminus \{ 0\}} |\Fw_k|^2 e^{2\tau |k|^{1/s}} = \frac{1}{1+\aalpha}\Vert e^{\tau \LLs} \omega \Vert_{L^2}^2 .
\end{align*}We therefore have the {\it a priori} estimate
  \begin{align}
    \frac 12 \frac{d}{dt} \Vert e^{\tau \LLs} \omega \Vert_{L^2}^2 - \dot{\tau} \Vert \LL^{1/2s} e^{\tau \LLs} \omega \Vert_{L^2}^2 + \frac{\nu}{1+\aalpha} \Vert e^{\tau \LLs} \omega \Vert_{L^2}^2 \leq |\langle u\cdot \nabla \omega, e^{2\tau \LLs} \omega\rangle |.\llabel{eq:2d:ODE1}
  \end{align}The following lemma gives a bound on the convection term on the right of \eqref{eq:2d:ODE1} above.
  \begin{lemma}\label{lemma:2d:convection}
    There exists a dimensional constant $C>0$ such that for all $\omega \in {\mathcal D}(\LL^{1/2s} e^{\tau \LLs})$, and divergence free $u={\mathcal K}_{\alpha} \omega$, we have
\begin{align}
\left| \langle u\cdot \nabla \omega, e^{2\tau \LLs} \omega \rangle \right| \leq \frac{C\tau}{\alpha} \Vert e^{\tau \LLs} \omega \Vert_{L^2} \Vert \LL^{1/2s} e^{\tau \LLs} \omega \Vert_{L^2}^2.\llabel{eq:2d:convection}
\end{align}
  \end{lemma} Therefore, by \eqref{eq:2d:ODE1} and \eqref{eq:2d:convection}, if we chose $\tau$ that satisfies
  \begin{align}\llabel{eq:2d:taucondition}
  \dot{\tau} + \frac{C \tau}{\alpha} \Vert e^{\tau \LLs} \omega \Vert_{L^2} = 0,
  \end{align} then we have
  \begin{align*}
  \frac 12 \frac{d}{dt} \Vert \omega \Vert_{X_{s,\tau}}^2  + \frac{\nu}{1+\aalpha} \Vert \omega \Vert_{X_{s,\tau}}^2 \leq 0,
  \end{align*} and hence
  \begin{align}
    \Vert e^{\tau(t) \LLs} \omega(t) \Vert_{L^2} \leq \Vert e^{\tau_0 \LLs} \omega_0 \Vert_{L^2} e^{-\gamma t},
  \end{align}where we have denoted $\gamma = \nu/(2 + 2 \aalpha)$. The above estimate and condition \eqref{eq:2d:taucondition} show that
  \begin{align}
    \tau(t) \geq \tau_0 e^{-\frac{C}{\alpha} \Vert e^{\tau_0 \LLs} \omega_0 \Vert_{L^2}  \int_{0}^{t}e^{-\gamma s}ds} \geq \tau_0 e^{-C (2+2\aalpha) \Vert e^{\tau_0 \LLs} \omega_0 \Vert_{L^2}/ (\nu \alpha)},
  \end{align}which concludes the proof of the theorem. The above {\it a priori} estimates are made rigorous using a classical Fourier-Galerkin approximating sequence. We omit further details.
\end{proof}


\subsection{The case $\alpha$ small} The lower bound \eqref{eq:thm:2d:big} on the radius of Gevrey-class regularity goes to $0$ as $\alpha\rightarrow 0$. In this section we give a new estimate on $\tau(t)$, in the case when $\alpha$ is small.
\begin{theorem}\label{thm:2d:small}
Fix $\nu> 0$, $0\leq \alpha < 1$, and assume that $\curl u_0 \in {\mathcal D}(\Delta e^{\tau_0 \Lambda^{1/s}})$, for some $s\geq 1$, and $\tau_0 > 0$. Then there exists a unique global in time Gevrey-class $s$ solution $u(t)$ to \eqref{eq:u1}--\eqref{eq:u3}, such that for all $t\geq 0$ we have $u(t) \in {\mathcal D}(e^{\tau(t) \Lambda^{1/s}})$, and moreover we have the lower bound
   \begin{align}\llabel{eq:thm:2d:small}
    \tau(t) \geq \frac{\tau_0}{1+C_0\tau_0},
   \end{align}where $C_0 = C_0(\nu, \Vert  u_0 \Vert_{H^3}, \Vert \LL e^{\tau_0 \LLs} \curl u_0 \Vert_{L^2}, \Vert e^{\tau \LLs} \curl \Delta u_0 \Vert_{L^2}$) is given explicitly in \eqref{eq:C0def}.
\end{theorem}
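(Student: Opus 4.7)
The principal obstruction to reusing the proof of Theorem~\ref{thm:2d:big} is that the estimate \eqref{eq:Kestimate} provides $\|u\|_{H^3}\leq C\|\omega\|_{L^2}/\alpha$, which degenerates as $\alpha \to 0$. My plan is to recast the equation in terms of $\zeta := \curl u$ rather than $\omega = (I - \aalpha\Delta)\zeta$, since $u$ is recovered from $\zeta$ via the classical Biot--Savart law $u = K\ast\zeta$ with constants uniform in $\alpha$. Inserting $\omega = \zeta - \aalpha\Delta\zeta$ into \eqref{eq:w1} (for $d=2$) yields
\begin{align*}
\partial_t(\zeta - \aalpha\Delta\zeta) - \nu\Delta\zeta + (u\cdot\nabla)(\zeta - \aalpha\Delta\zeta) = 0,
\end{align*}
which reduces to the 2D Navier--Stokes vorticity equation at $\alpha = 0$, providing the natural framework for $\alpha$-uniform estimates.

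As a preliminary step, I would pair the $\omega$-equation with $\omega$ (as in Theorem~\ref{thm:2d:big}) and expand $\omega = \zeta - \aalpha\Delta\zeta$ to obtain the $\alpha$-uniform exponential decay
\begin{align*}
\|\zeta(t)\|_{L^2}^2 + 2\aalpha\|\nabla\zeta(t)\|_{L^2}^2 + \alpha^4\|\Delta\zeta(t)\|_{L^2}^2 \leq \|\omega_0\|_{L^2}^2\, e^{-2\nu t/(1+\aalpha)}.
\end{align*}
Standard higher-order Sobolev arguments then furnish the global bound $\|u(t)\|_{H^3}\leq C(\nu,\|u_0\|_{H^3})$, which remains stable as $\alpha \to 0$.

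The core of the proof is a Gevrey energy estimate on
\begin{align*}
\Phi(t) := \|\LL e^{\tau\LLs}\zeta(t)\|_{L^2}^2 + \aalpha\|\Delta e^{\tau\LLs}\zeta(t)\|_{L^2}^2,
\end{align*}
whose initial value is exactly what the hypotheses on $\curl u_0$ control. Applying $\LL e^{\tau\LLs}$ to the $\zeta$-equation and pairing with $\LL e^{\tau\LLs}\zeta$ in $L^2$ (and analogously for the $\aalpha\Delta$-weighted component) yields the viscous dissipation $\nu\|\LL^2 e^{\tau\LLs}\zeta\|_{L^2}^2$, the favorable sign term $-\dot\tau\bigl(\|\LL^{1+1/(2s)}e^{\tau\LLs}\zeta\|_{L^2}^2 + \aalpha\|\LL^{2+1/(2s)}e^{\tau\LLs}\zeta\|_{L^2}^2\bigr)$, and a transport contribution $\mathcal N(t)$. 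I would estimate $\mathcal N$ via the Bony paraproduct decomposition \eqref{eq:decompobony}, distributing the Gevrey weight across the two factors by $e^{\tau|k|^{1/s}}\leq e^{\tau|k-j|^{1/s}}e^{\tau|j|^{1/s}}$ and exploiting the commutator structure enabled by $\ddiv u = 0$ at each dyadic block, leading to an estimate of the form
\begin{align*}
|\mathcal N(t)| \leq C\tau^2\bigl(1+\|u\|_{H^3}\bigr)\Phi(t)^{1/2}\bigl(\|\LL^{1+1/(2s)}e^{\tau\LLs}\zeta\|_{L^2}^2 + \aalpha\|\LL^{2+1/(2s)}e^{\tau\LLs}\zeta\|_{L^2}^2\bigr).
\end{align*}

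Finally, choosing $\tau$ to satisfy $\dot\tau + C\tau^2(1+\|u\|_{H^3})\Phi(t)^{1/2} \leq 0$ absorbs $\mathcal N$ into the favorable $-\dot\tau$ term. Using the Poincar\'e inequality, the remaining differential inequality gives exponential decay of $\Phi$, which in turn makes $(1+\|u\|_{H^3})\Phi^{1/2} \in L^1([0,\infty))$. Integrating the $\tau$-ODE then produces $1/\tau(t) - 1/\tau_0 \leq C_0$, i.e., the advertised bound $\tau(t) \geq \tau_0/(1 + C_0\tau_0)$. The main obstacle is the delicate paraproduct analysis securing the extra factor of $\tau$ compared to Lemma~\ref{lemma:2d:convection}, while simultaneously keeping all $\aalpha$-weighted transport contributions uniform in $\alpha \in [0,1)$ by routing them through the $\aalpha$-weighted component of $\Phi$ rather than through the degenerate estimate \eqref{eq:Kestimate}.
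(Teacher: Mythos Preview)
Your overall strategy is close to the paper's: you work with $\zeta=\curl u$, you use the same energy $\Phi=\Vert\Lambda e^{\tau\Lambda^{1/s}}\zeta\Vert_{L^2}^2+\aalpha\Vert\Delta e^{\tau\Lambda^{1/s}}\zeta\Vert_{L^2}^2$ (this is exactly the paper's $Z+\aalpha W$), and you aim for a Riccati ODE $\dot\tau+C\tau^2(\cdots)\leq 0$ whose integration yields $\tau(t)\geq\tau_0/(1+C_0\tau_0)$. The gap is in your claimed nonlinear bound
\[
|\mathcal N(t)|\leq C\tau^2(1+\Vert u\Vert_{H^3})\,\Phi^{1/2}\bigl(\Vert\Lambda^{1+1/(2s)}e^{\tau\Lambda^{1/s}}\zeta\Vert_{L^2}^2+\aalpha\Vert\Lambda^{2+1/(2s)}e^{\tau\Lambda^{1/s}}\zeta\Vert_{L^2}^2\bigr).
\]
This cannot hold as stated. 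The commutator $[\Lambda e^{\tau\Lambda^{1/s}},\,u\cdot\nabla]$ splits into a Gevrey part (the $e^{\tau\Lambda^{1/s}}$-commutator), which does contribute a factor of $\tau$, and a derivative part (the $\Lambda$-commutator, symbol $|l|-|k|$), which carries no factor of $\tau$ whatsoever. No paraproduct rearrangement will manufacture a $\tau$ on that second piece; at best you get a Kato--Ponce type bound of size $\Vert\nabla u\Vert_{L^\infty}Z$, and since the theorem is for large data you cannot absorb this directly into the dissipation $\nu W$. Consequently your ``remaining differential inequality gives exponential decay of $\Phi$'' is unsupported: a term of order $\Phi$ with no smallness survives on the right, and Gr\"onwall alone will not close.

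The paper handles precisely this $\tau$-independent piece in Lemma~\ref{lemma:2d:new:estimate} by a different splitting: instead of $\Vert\nabla u\Vert_{L^\infty}Z$, it obtains (for $T_2$) a term of the form $C\Vert\curl u\Vert_{L^2}\,Z^{1/4}W^{3/4}$, with a \emph{low} norm $\Vert\curl u\Vert_{L^2}$ on the velocity factor and the extra half-derivative pushed onto $W$. Young's inequality then trades $W^{3/4}$ against the viscous dissipation $\tfrac{\nu}{2}W$, leaving the source term $\tfrac{C}{\nu^3}\Vert\curl u\Vert_{L^2}^4 Z$. This is where the preliminary exponential decay of $\Vert\curl u(t)\Vert_{L^2}$ (which you did establish) enters: it makes this source integrable in time, so Gr\"onwall bounds $Z+\aalpha W$ uniformly, and a second integration gives $\int_0^\infty W\,dt<\infty$. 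The paper's Riccati choice is $\dot\tau+\tfrac{C}{\nu}\tau^2 W=0$, and the bound on $\int W$ yields the conclusion. In short, you need a $\tau$-independent estimate that places the fewest possible derivatives on $u$ (only $\Vert\curl u\Vert_{L^2}$) so that the leftover after absorbing into dissipation is governed by an exponentially decaying coefficient, not merely a bounded one.
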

\begin{proof}[Proof of Theorem~\ref{thm:2d:small}]For simplicity of the presentation, we give the proof in the case $s=1$.
Taking the $L^2$-inner product of \eqref{eq:u1} with $- e^{2\tau \LL} \rot \Delta u$, we obtain
\begin{align}
& \frac 12 \frac{d}{dt} \left( \Vert \LL e^{\tau \LL} \rot u \Vert_{L^2}^2 + \aalpha \Vert e^{\tau \LL} \rot \Delta u \Vert_{L^2}^2 \right) + \nu \Vert e^{\tau \LL} \rot \Delta u \Vert_{L^2}^2 \notag \\
&\qquad \qquad \qquad  - \dot{\tau} \left( \Vert \LL^{3/2} e^{\tau \LL} \rot u \Vert_{L^2}^2 + \aalpha \Vert \LL^{1/2} e^{\tau \LL} \rot \Delta u \Vert_{L^2}^2\right) \leq T_1 + T_2,\llabel{eq:ode1}
\end{align}where
\begin{align}
  T_1 = \aalpha \left| \langle e^{\tau \LL} \left( (u\cdot \nabla) \Delta \rot u\right), e^{\tau \LL} \Delta \rot u \rangle\right|,
\end{align}and
\begin{align}
  T_2 = \left| \langle \LL e^{\tau \LL} \left( (u\cdot \nabla) \rot u \right), \LL e^{\tau \LL} \rot u \rangle \right|.
\end{align}
The upper bounds for $T_1$ and $T_2$ are given in the following lemma.
\begin{lemma}\label{lemma:2d:new:estimate}
Let $\nu,\tau>0$, $0\leq \alpha<1$, and $u$ be such that $\curl u \in {\mathcal D}(\LL^{5/2} e^{\tau \LL})$. Then
\begin{align}\llabel{eq:2d:T1}
T_1 \leq \frac{\nu}{4} \Vert e^{\tau \LL} \rot \Delta u \Vert_{L^2}^2 + \frac{C \alpha^4 \tau^2}{\nu} \Vert \LL^{1/2} e^{\tau \LL} \rot \Delta u \Vert_{L^2}^2 \Vert e^{\tau \LL} \rot \Delta u \Vert_{L^2}^2,
\end{align}
and
\begin{align}\llabel{eq:2d:T2}
T_2 &\leq \frac{\nu}{4} \Vert e^{\tau \LL} \Delta \rot u \Vert_{L^2}^2 + \frac{C}{\nu^3} \Vert \curl u \Vert_{L^2}^4 \Vert \LL e^{\tau \LL} \rot u \Vert_{L^2}^2\notag\\
 & \qquad \qquad \qquad + \frac{C\tau^2}{\nu} \Vert \LL^{3/2} e^{\tau \LL} \rot u \Vert_{L^2}^2 \Vert \LL e^{\tau \LL} \rot u \Vert_{L^2}^2,
\end{align}
where $C>0$ is a universal constant.
\end{lemma}
We give the proof of the above lemma in the Appendix (cf.~Section~\ref{app:2d:new:estimate}). Assuming that estimates \eqref{eq:2d:T1} and \eqref{eq:2d:T2} are proven, we obtain from \eqref{eq:ode1} that
\begin{align}
& \frac 12 \frac{d}{dt} \left( \Vert \LL e^{\tau \LL} \rot u \Vert_{L^2}^2 + \aalpha \Vert e^{\tau \LL} \rot \Delta u \Vert_{L^2}^2 \right) + \frac{\nu}{2} \Vert e^{\tau \LL} \rot \Delta u \Vert_{L^2}^2 \notag\\
& \qquad \qquad \leq  \frac{C}{\nu^3} \Vert \curl u \Vert_{L^2}^4 \Vert \LL e^{\tau \LL} \rot u \Vert_{L^2}^2  + \left( \dot{\tau} + \frac{C\tau^2}{\nu} \Vert \LL e^{\tau \LL} \rot u \Vert_{L^2}^2\right) \Vert \LL^{3/2} e^{\tau \LL} \rot u \Vert_{L^2}^2 \notag \\
& \qquad \qquad + \aalpha \left( \dot{\tau} + \aalpha\frac{C\tau^2}{\nu} \Vert e^{\tau \LL} \rot \Delta u \Vert_{L^2}^2\right) \Vert \LL^{1/2} e^{\tau \LL} \rot \Delta u \Vert_{L^2}^2.\llabel{eq:ode2}
\end{align}Define
\begin{align*}
  Z(t) = \Vert \LL e^{\tau \LL} \rot u \Vert_{L^2}^2 \\
 \end{align*}
  and
 \begin{align*}
  W(t)= \Vert e^{\tau \LL} \rot \Delta u \Vert_{L^2}^2.
\end{align*}We let $\tau$ be decreasing fast enough so that
\begin{align}
  \dot{\tau}(t) + \frac{C \tau(t)^2}{\nu} W(t) = 0, \llabel{eq:tau1}
\end{align}which by the Poincar\'e inequality implies
\begin{align*}
\dot{\tau} + \frac{C\tau^2}{\nu} \Vert \LL e^{\tau \LL} \rot u \Vert_{L^2}^2\leq 0,
\end{align*}
and also
\begin{align*}
\dot{\tau} + \aalpha\frac{C\tau^2}{\nu} \Vert e^{\tau \LL} \rot \Delta u \Vert_{L^2}^2\leq 0,
\end{align*}
since by assumption $\alpha \leq 1$. It follows that for all $0 \leq \alpha \leq 1$ we have
\begin{align}
  \frac 12 \frac{d}{dt} (Z+\aalpha W) + \frac{\nu}{2} W &\leq \frac{C}{\nu^3} \Vert \curl u \Vert_{L^2}^4 Z \llabel{eq:ode31}\\
  &\leq \frac{C}{\nu^3}\Vert \curl u\Vert_{L^2}^4(Z+\aalpha W). \llabel{eq:ode32}
\end{align}We recall that $\omega = \curl (I - \aalpha \Delta) u$ solves the equation
\begin{align}
\partial_t \omega +\nu {\mathcal R}_\alpha \omega + (u\cdot \nabla)\omega=0
\end{align}
which by the classical energy estimates implies
\begin{align}
\frac 12\frac{d}{dt}\|\omega(t)\|_{L^2}^2+\frac{\nu}{1+\aalpha}\|\omega(t)\|^2_{L^2}\leq 0
\end{align}and therefore
\begin{align}
\|\omega(t)\|_{L^2}^2\leq \|\omega_0\|_{L^2}^2 e^{-2\gamma t}
\end{align}
where $\gamma=\nu/(2+2\aalpha)$. Using that $0\leq\alpha< 1$ and
\begin{align}
\|\omega\|_{L^2}^2=\|\rot u\|^2_{L^2}+2\aalpha\|\Delta u\|_{L^2}^2+\alpha^4\|\rot\Delta u\|_{L^2}^2
\end{align}  we obtain the exponential decay rate
\begin{align}
\|\curl u(t)\|_{L^2}\leq C \|u_0\|_{H^3} e^{-\gamma t}.\llabel{eq:ode4}
\end{align}Combining \eqref{eq:ode32} and \eqref{eq:ode4}, and using $\alpha \leq 1$, we get
\begin{align}
  Z(t) &\leq (Z(0)+\aalpha W(0)) e^{\frac{C}{\nu^3} \int_0^t\|\curl u(s)\|^4_{L^2}\, ds}\notag\\
  &\leq (Z(0)+\aalpha W(0))e^{\frac{C}{4 \gamma \nu } \Vert u_0 \Vert_{H^3}^4} \leq (Z(0)+ W(0))e^{\frac{C}{\nu^4} M_0^4}, \llabel{eq:ode5}
\end{align}where we have denoted $M_0 = \Vert u_0 \Vert_{H^3}$. Plugging the above bound in \eqref{eq:ode31} and integrating in time, we obtain
\begin{align}
Z(t)+\aalpha W(t)+\frac{\nu}{2}\int_0^t W(s)\, ds &\leq (Z(0) + W(0)) \left( 1 + \frac{C}{\nu^3} e^{CM_0^4 / \nu^4} \int_0^t \|\curl u(s)\|_{L^2}^4\, ds\right)\notag\\
&\leq (Z(0) + W(0)) \left( \frac{1}{\nu^2} + \frac{C M_0^4}{\nu^6} e^{CM_0^4 / \nu^4}\right) \nu^2 =C_0 \nu^2,\llabel{eq:C0def}
\end{align}where $C_0 =C_0(\nu,\Vert u_0 \Vert_{H^3},Z(0),W(0))>0$ is a constant depending on the data. Thus, by the construction of $\tau$ in \eqref{eq:tau1} and the above estimate, by possibly enlarging $C_0$, we have the lower bound
\begin{align}
  \tau(t) &= \left( \frac{1}{\tau_0} + \frac{C}{\nu} \int_0^t W(s)\, ds \right)^{-1}\geq  \frac{\tau_0}{1 + \tau_0 C_0},\llabel{eq:taubound}
\end{align}thereby proving \eqref{eq:thm:2d:small}. We note that this lower bound is independent of $t\geq 0$, and $0\leq \alpha \leq 1$. This concludes the {\it a priori} estimates needed to prove Theorem~\ref{thm:2d:small}. The formal construction of the real-analytic solution is standard and we omit details. The proof of the theorem in the case $s>1$ follows {\it mutatis mutandis}.
\end{proof}

\subsection{Convergence to the Navier-Stokes equations as $\alpha\rightarrow 0$} \label{sec:NSEconv}
In this section we compare in an analytic norm the solutions of the second-grade fluids equations with those of the corresponding Navier-Stokes equations, in the limit as $\alpha$ goes to zero. The fact that the analyticity radius for the solutions of the second-grade fluids is bounded from bellow by a positive constant, for all positive time, will play a fundamental role. We consider $a>0$ and $u_0$ such that $e^{a\Lambda}u_0\in H^3({\mathbb T}^2)$. We recall that the Navier-Stokes equations
\begin{align}
&\partial_t u-\nu\Delta u+\rot u\times u + \nabla p=0 \notag \\
&\ddiv u=0\\
&u|_{t=0}=u_0,\notag
\end{align}
have a unique global regular solution when $u_0\in L^2({\mathbb T}^2)$. Moreover, this solution is analytic for every $t>0$, and if $e^{\delta \Lambda}u_0\in H^3$ one can prove that $e^{\delta\Lambda} u(t)\in H^3$ for all $t>0$ (for example, one can use the same proof as in the one in Section~\ref{sec:damped-euler}). Let $u_\alpha$ denote the solution of the second-grade fluids equations; then $z=u_\alpha-u$ satisfies the following equation
\begin{align}
&\partial_t (z-\alpha^2\Delta z)-\nu\Delta z+\rot z\times u_\alpha+\rot u\times z+\nabla(p_\alpha-p)=\alpha^2\partial_t\Delta u+\alpha^2\rot\Delta u_\alpha\times u_\alpha \notag\\
&\ddiv z=0\\
&z(0)=0.\notag
\end{align}
The following product Sobolev estimate (see \cite{Chemin2}) will prove to be very useful
\begin{equation}
\label{lege-produs}\|e^{\delta\Lambda}(ab)\|_{H^{s_1+s_2-1}({\mathbb T}^2)}\leq \|e^{\delta\Lambda}a\|_{H^{s_1}({\mathbb T}^2)}\|e^{\delta\Lambda}b\|_{H^{s_2}({\mathbb T}^2)},
\end{equation}
 where $s_1+s_2>0$, $s_1<1$, $s_2<1$.
Applying $e^{\delta\Lambda}$ with $0<\delta<a$ fixed but small enough (given for example by \eqref{eq:taubound}) to the equation, denoting by $z^\delta(t)=e^{\delta\Lambda} z(t)$, and considering the $L^2({\mathbb T}^2)$ energy estimates, using \eqref{lege-produs}, the Young inequality, and the classical Sobolev inequalities, we obtain the following estimate
\begin{align*}
&\frac 12\frac{d}{dt}(\|z^\delta\|^2_{L^2}+\alpha^2\|\nabla z^\delta\|_{L^2}^2)+\nu\|\nabla z^\delta\|_{L^2}^2\\
& \qquad \qquad \qquad  \leq \frac{C\alpha^4}{\nu}\|\partial_t\nabla u^\delta\|_{L^2}^2+\frac{C}{\nu}\|u_\alpha^\delta\|_{H^{\frac 12}}^2\|z^\delta\|_{H^{\frac 12}}^2+\frac{\nu}{50}\|\nabla z^\delta\|_{L^2}^2\\
&\qquad \qquad \qquad \qquad +\alpha^2\|\rot\Delta u_\alpha^\delta\|_{L^2}\|u_\alpha^\delta\|_{H^{\frac 12}}\|z^\delta\|_{H^{\frac 12}}+\|\rot u^\delta\|_{L^2}\|z^\delta\|^2_{H^{\frac 12}}\\
&\qquad \qquad \qquad \leq \frac{C\alpha^4}{\nu}\|\partial_t\nabla u^\delta\|_{L^2}^2+\frac{C}{\nu}\|u_\alpha^\delta\|_{L^2}\|\nabla u_\alpha^\delta\|_{L^2}\|z^\delta\|_{L^2}\|\nabla z^\delta\|_{L^2}+\frac{\nu}{50}\|\nabla z^\delta\|_{L^2}^2\\
&\qquad \qquad \qquad \qquad +\alpha^2 \|\rot\Delta u_\alpha^\delta\|_{L^2}\|u_\alpha^\delta\|_{L^2}^{\frac 12}\|\nabla u_\alpha^\delta\|_{L^2}^{\frac 12}\|z^\delta\|_{L^2}^{\frac 12}\|\nabla z^\delta\|_{L^2}^{\frac 12}+\|u^\delta\|_{H^1}\|z^\delta\|_{L^2}\|\nabla z^\delta\|_{L^2}\\
&\qquad \qquad \qquad \leq \frac{C\alpha^4}{\nu}\|\partial_t\nabla u^\delta\|_{L^2}^2+\frac{\nu}{4}\|\nabla z^\delta\|_{L^2}^2
+\frac{C}{\nu^2}\|u_\alpha^\delta\|_{L^2}^2\|\nabla u_\alpha^\delta\|_{L^2}^2\|z^\delta\|_{L^2}^2\\
&\qquad \qquad \qquad  \qquad+\frac{C\alpha^4}{\nu}\|\rot\Delta u_\alpha^\delta\|_{L^2}^2\|u_\alpha^\delta\|_{L^2}\|\nabla u_\alpha^\delta\|_{L^2}+\frac{\nu}{4}\|z^\delta\|_{L^2}^2+\frac{C}{\nu}\|u^\delta\|_{H^1}^2\|z^\delta\|_{L^2}^2.
\end{align*}
From the above estimate and the Poincar\'{e} inequality, we deduce that for $t \geq 0$
\begin{align*}
&\frac{d}{dt} (\|z^\delta\|_{L^{2}}^{2} + \alpha^2
\|\nabla z^\delta\|_{L^{2}}^{2}) + \gamma
\big(\|z^\delta\|_{L^{2}}^{2} + \alpha^2
\|\nabla z^\delta\|_{L^{2}}^{2} \big) \leq \left( \frac{C}{\nu}\|u^\delta\|_{H^1}^2 + \frac{C}{\nu^2}
\|u_{\alpha}^\delta\|_{L^{2}}^2\|\nabla
u_{\alpha}^\delta\|_{L^{2}}^2 \right) \|z^\delta\|_{L^{2}}^2\\
&\qquad \qquad \qquad \qquad + \frac{C\alpha^4}{\nu}\big( \|\partial_t \nabla u^\delta \|_{L^{2}}^{2}+\| \rot \Delta u_{\alpha}^\delta\|_{L^{2}}^2
\|u_{\alpha}^\delta\|_{L^{2}}\|\nabla u_{\alpha}^\delta\|_{L^{2}}\big),
\end{align*} where $0 < \gamma = \nu/(2+2\aalpha)$.
Integrating this inequality from  $0$ to $t$ and using
the Gr\"onwall inequality, we obtain
\begin{align*}
& \|z^\delta(t)\|_{L^{2}}^{2} + \alpha^2
\|\nabla z^\delta(t)\|_{L^{2}}^{2} \leq \int_{0}^{t}\left(\frac{C}{\nu}\|u^\delta\|_{H^1}^2+\frac{C}{\nu^2}
\|u_{\alpha}^\delta\|_{L^{2}}^2\|\nabla
u_{\alpha}^\delta\|_{L^{2}}^2 \right) \|z^\delta\|_{L^{2}}^2 ds\\
&\qquad \qquad \qquad  +\frac{C\alpha^4}{\nu} \int_{0}^{t} \exp \left( \gamma(s-t)\right) \Big(\|\partial_t
\nabla u^\delta
 (s)\|_{L^{2}}^{2}+  \| \rot \Delta
u_{\alpha}^\delta\|_{L^{2}}^2\|u_{\alpha}^\delta\|_{L^{2}}\|\nabla u_{\alpha}^\delta\|_{L^{2}}\Big)ds.
\end{align*} Using one more time the Gr\"onwall
lemma, we deduce from the above estimate that, for $t \geq 0$
\begin{align}
&\|z^\delta(t)\|_{L^{2}}^{2} + \alpha^2
\|\nabla z^\delta(t)\|_{L^{2}}^{2} \leq \exp \left(\int_{0}^{t}\Big( \frac{C}{\nu}\|u^\delta\|_{H^1}^2 + \frac{C}{\nu^2}
\|u_{\alpha}^\delta\|_{L^{2}}^2\|\nabla
u_{\alpha}^\delta\|_{L^{2}}^2 \Big) ds \right) \notag\\
& \qquad \qquad \times  \frac{C\alpha^4}{\nu} \int_{0}^{t} \exp(\gamma(s-t)) \Bigg(
\|\partial_t \nabla u^\delta
 (s)\|_{L^{2}}^{2} + \| \rot \Delta
u_{\alpha}^\delta\|_{L^{2}}^2
\|u_{\alpha}^\delta\|_{L^{2}}\|\nabla u_{\alpha}^\delta\|_{L^{2}} \Bigg)ds.\llabel{estimez2}
\end{align}
 We recall the estimate \eqref{eq:C0def} on $u_\alpha^\delta$, which gives
\begin{equation}
\llabel{apriori4}
\|\Delta u_\alpha^\delta\|_{L^2}^2+\alpha^2\|\rot\Delta u_\alpha^\delta\|_{L^2}^2+\nu \int_0^t\|\rot\Delta u_\alpha^\delta\|_{L^2}^2\leq M_0.
\end{equation}
The equation on $u_\alpha$ gives that $\partial_t u_\alpha=(I-\alpha^2\Delta)^{-1}[\nu\Delta u_\alpha-{\mathbb P}(\curl(u_\alpha-\alpha^2\Delta u_\alpha)\times u_\alpha)]$, and using the estimate \eqref{lege-produs}, the previous bound and the fact that the operator $\alpha\nabla(I-\alpha^2\Delta)^{-1}$ is uniformly bounded on $L^2({\mathbb T}^2)$, we obtain that $\alpha\|\partial_t \nabla u^\delta_\alpha\|_{L^2}\leq C M_0$.
When  $\alpha \leq 1$, the inequality \eqref{estimez2} together with the above uniform bounds
 and the corresponding property for the Navier-Stokes equation, namely $\int_0^t\|u^\delta\|_{H^1}^2\leq M$, implies that
\begin{equation}
\llabel{estimez3}
\|z^\delta(t)\|_{L^{2}}^{2} + \alpha
\|\nabla z^\delta(t)\|_{L^{2}}^{2} \leq   \alpha^2 K_0 e^ K_1,
\end{equation}
where $K_0$ and $K_1$ are positive constants depending only on $\| e^{a\Lambda}u_0\|_{H^{3}}$.
 Thus, we obtain the convergence in the analytic norm as $\alpha \to 0$ of the solution of the second-grade fluid to the solutions of Navier-Stokes equations, with same analytic initial data $u_0$, such that $e^{a \Lambda} u_0\in H^3$.

\section{The three-dimensional case}\label{sec:3d}\setcounter{equation}{0}
\subsection{Global in time results for small initial data}\label{sec:3d:small}
In this section we state our main result in the case $\nu >0$, with small initial data. There exists a global in time solution whose Gevrey-class radius is bounded from below by a positive constant for all time. A similar result for small data is obtained in \cite{Sang}.
\begin{theorem}\label{thm:3d:small}
   Fix $\nu, \alpha> 0$, and assume that $\omega_0 \in {\mathcal D}(\LL^{1/2s} e^{\tau_0 \LLs})$, for some $s\geq 1$, and $\tau_0 > 0$. There exists a positive sufficiently large dimensional constant $\kappa$, such that if
   \begin{align}\llabel{thm:3d:condition}
     \kappa \Vert \omega_0 \Vert_{L^2} \leq \frac{\nu \alpha}{2(1+\aalpha)},
   \end{align} then there exists a unique global in time Gevrey-class $s$ solution $\omega(t)$ to \eqref{eq:w1}--\eqref{eq:w3}, such that for all $t\geq 0$ we have $\omega(t) \in {\mathcal D}(e^{\tau(t) \LLs})$, and moreover we have the lower bound
   \begin{align}\llabel{eq:thm:3d:small}
    \tau(t) \geq \tau_0 e^{-\kappa (4+4\aalpha) M_0 / (\nu \alpha)}
   \end{align}for all $t\geq 0$, where $M_0 = \Vert e^{\tau_0 \LLs} \omega_0 \Vert_{L^2}$.
\end{theorem}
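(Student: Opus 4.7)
The plan is to adapt the Foias-Temam argument used in Theorem~\ref{thm:2d:big} to three dimensions, with the two main new features being (i) the vortex stretching term $(\omega\cdot\nabla)u$ on the right side of \eqref{eq:w1}, and (ii) the fact that $\Vert\omega\Vert_{L^2}$ is no longer automatically dissipated and must be controlled by a smallness bootstrap. Testing \eqref{eq:w1} with $e^{2\tau\LLs}\omega$ in $L^2$ and using the Parseval computation from the proof of Theorem~\ref{thm:2d:big} (which applies since the Fourier multiplier $|k|^2/(1+\aalpha|k|^2)$ is bounded below by $1/(1+\aalpha)$ for $|k|\geq 1$), one obtains
\begin{align*}
\frac{1}{2}\frac{d}{dt}\Vert e^{\tau\LLs}\omega\Vert_{L^2}^2 - \dot\tau \Vert \LL^{1/2s} e^{\tau\LLs}\omega\Vert_{L^2}^2 + \frac{\nu}{1+\aalpha}\Vert e^{\tau\LLs}\omega\Vert_{L^2}^2 \leq |I_1| + |I_2|,
\end{align*}
where $I_1 = \langle e^{\tau\LLs}((u\cdot\nabla)\omega), e^{\tau\LLs}\omega\rangle$ and $I_2 = \langle e^{\tau\LLs}((\omega\cdot\nabla)u), e^{\tau\LLs}\omega\rangle$.

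The first ingredient is a bootstrapped $L^2$ bound on $\omega$. Testing \eqref{eq:w1} directly with $\omega$, the transport term vanishes by incompressibility, while the vortex stretching is bounded by $\Vert\nabla u\Vert_{L^\infty}\Vert\omega\Vert_{L^2}^2\leq \frac{C}{\alpha}\Vert\omega\Vert_{L^2}^3$ via the Sobolev embedding $H^3\hookrightarrow W^{1,\infty}$ and \eqref{eq:Kestimate}. A standard continuation argument using \eqref{thm:3d:condition} then yields $\Vert\omega(t)\Vert_{L^2}\leq \Vert\omega_0\Vert_{L^2}e^{-\gamma t}$ with $\gamma = \nu/(2+2\aalpha)$. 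The second ingredient is the nonlinear Gevrey bounds. A direct three-dimensional analogue of Lemma~\ref{lemma:2d:convection} gives
\begin{align*}
|I_1| \leq \frac{C\tau}{\alpha}\Vert e^{\tau\LLs}\omega\Vert_{L^2}\Vert\LL^{1/2s}e^{\tau\LLs}\omega\Vert_{L^2}^2,
\end{align*}
and a paraproduct decomposition $\omega\otimes u = T_\omega u + T_u\omega + R(\omega,u)$ combined with the Bernstein lemma and the Biot--Savart bound \eqref{eq:Kestimate} yields
\begin{align*}
|I_2| \leq \frac{C}{\alpha}\Vert\omega\Vert_{L^2}\Vert e^{\tau\LLs}\omega\Vert_{L^2}^2 + \frac{C\tau}{\alpha}\Vert e^{\tau\LLs}\omega\Vert_{L^2}\Vert\LL^{1/2s}e^{\tau\LLs}\omega\Vert_{L^2}^2,
\end{align*}
where the first summand captures the low-frequency contribution with no derivative loss, while the second captures the high-frequency contribution to be absorbed by $\dot\tau$.

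Combining these, the smallness hypothesis \eqref{thm:3d:condition} together with the exponential $L^2$ decay guarantees $\frac{C}{\alpha}\Vert\omega(t)\Vert_{L^2}\leq \frac{\nu}{2(1+\aalpha)}$ for all $t\geq 0$, so the first summand of $|I_2|$ is absorbed into half of the linear dissipation. Imposing
\begin{align*}
\dot\tau(t) + \frac{2C\tau(t)}{\alpha}\Vert e^{\tau(t)\LLs}\omega(t)\Vert_{L^2} = 0
\end{align*}
absorbs the derivative-loss parts of both $|I_1|$ and $|I_2|$ into $\dot\tau\Vert\LL^{1/2s}e^{\tau\LLs}\omega\Vert_{L^2}^2$, yielding $\frac{d}{dt}\Vert e^{\tau\LLs}\omega\Vert_{L^2}^2 + \frac{\nu}{1+\aalpha}\Vert e^{\tau\LLs}\omega\Vert_{L^2}^2 \leq 0$. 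This integrates to $\Vert e^{\tau(t)\LLs}\omega(t)\Vert_{L^2}\leq M_0 e^{-\gamma t}$, and substituting into the ODE for $\tau$ gives
\begin{align*}
\tau(t) \geq \tau_0 \exp\Big(-\frac{2CM_0}{\alpha}\int_0^t e^{-\gamma s}\,ds\Big) \geq \tau_0 \exp\Big(-\frac{\kappa(4+4\aalpha)M_0}{\nu\alpha}\Big),
\end{align*}
matching \eqref{eq:thm:3d:small} for $\kappa$ sufficiently large in terms of $C$.

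The main obstacle is the sharp paraproduct bound for the vortex stretching term $|I_2|$: the splitting requires the low-frequency factor coming from $T_u\omega$ to be placed in $L^\infty$ via \eqref{eq:Kestimate}, while the Gevrey weight $e^{\tau|k|^{1/s}}\leq e^{\tau|k-j|^{1/s}}e^{\tau|j|^{1/s}}$ must be redistributed using the elementary subadditivity $|k|^{1/s}\leq |k-j|^{1/s}+|j|^{1/s}$, so that the extra $\tau$-factor (needed to trade one Gevrey derivative for $\LL^{1/2s}$) only appears on the high-frequency piece. Once this paraproduct estimate is established, the closure of the energy estimate and the derivation of the lower bound on $\tau$ proceed exactly as in Theorem~\ref{thm:2d:big}. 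The formal construction of the solution via a Fourier-Galerkin approximation is standard and the details are omitted.
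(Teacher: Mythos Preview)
Your proposal is correct and follows essentially the same route as the paper: the energy inequality in the Gevrey norm, the same two nonlinear bounds (your $|I_1|$ and $|I_2|$ coincide exactly with the paper's Lemma~\ref{lemma:new}), the bootstrapped exponential $L^2$ decay of $\omega$ from the smallness hypothesis, absorption of the low-frequency stretching piece into half of the dissipation, and the same ODE for $\tau$. The only cosmetic difference is that you sketch the stretching bound via a Bony paraproduct decomposition, whereas the paper obtains it by a direct Fourier-side argument using $e^{x}\le 1+xe^{x}$ (as in Levermore--Oliver); both yield the identical estimate \eqref{eq:stretching:term}.
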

The smallness condition \eqref{thm:3d:condition} ensures that $\Vert \omega(t) \Vert_{L^2}$ decays exponentially in time, and hence by the Sobolev and Poincar\'e inequalities the same decay holds for $\Vert \nabla u(t) \Vert_{L^\infty}$. Therefore, as opposed to the case of large initial data treated in Section~\ref{sec:3d:big}, in this case there is no loss in expressing the radius of Gevrey-class regularity in terms of the vorticity $\omega(t)$. It is thus more transparent to prove Theorem~\ref{thm:3d:small} by just using the operator $\Lambda$ (cf.~\cite{LO}), instead of using the operators $\Lambda_m$ (cf.~\cite{KV}) which are used to prove Theorem~\ref{thm:3d:big} below.

\begin{proof}[Proof of Theorem~\ref{thm:3d:small}]
Similarly to \eqref{eq:2d:ODE1}, we have the {\it a priori} estimate
\begin{align}
\frac 12 \frac{d}{dt} \Vert e^{\tau \LLs} \omega \Vert_{L^2}^2 + \frac{\nu}{1+\aalpha} \Vert e^{\tau \LLs} \omega \Vert_{L^2}^2 &\leq \dot{\tau} \Vert \LL^{1/2s} e^{\tau \LLs} \omega \Vert_{L^2}^2 \notag\\
& + | ( u\cdot \nabla \omega, e^{2\tau \LLs} \omega)| + |(\omega\cdot \nabla u, e^{2\tau \LLs} \omega)|. \llabel{eq::ode1}
\end{align}
The convection term and the vorticity stretching term are estimated in the following lemma.
\begin{lemma}\label{lemma:new}
  There exists a positive dimensional constant $C$ such that for $\omega \in Y_{s,\tau}$, and $u={\mathcal K}_\alpha$ is divergence-free, we have
  \begin{align}
    | ( u\cdot \nabla \omega, e^{2\tau \LLs} \omega)|  \leq \frac{C \tau}{\alpha} \Vert e^{\tau \LLs} \omega \Vert_{L^2} \Vert \LL^{1/2s} e^{\tau \LLs} \omega \Vert_{L^2}^2,\llabel{eq:convection:term}
  \end{align}
  and
  \begin{align}
    |(\omega\cdot \nabla u, e^{2\tau \LLs} \omega)| \leq \frac{C}{\alpha} \Vert \omega \Vert_{L^2} \Vert e^{\tau \LLs} \omega \Vert_{L^2}^2 + \frac{C \tau}{\alpha} \Vert e^{\tau \LLs} \omega \Vert_{L^2} \Vert \LL^{1/2s} e^{\tau \LLs} \omega \Vert_{L^2}^2.\llabel{eq:stretching:term}
  \end{align}
\end{lemma}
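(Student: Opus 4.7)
The plan is to reduce both estimates to Foias--Temam style commutator arguments in Fourier series, exploiting $\ddiv u = 0$ for the convection term, and using the Biot--Savart structure of $\mathcal{K}_\alpha$ to produce the negative power of $\alpha$. The two estimates differ only in that the stretching term lacks the divergence-free cancellation, and hence carries an extra, $\tau$-independent piece.

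For the convection term, I would first exploit $\ddiv u = 0$ to write
\[
(u\cdot\nabla\omega,\, e^{2\tau\LLs}\omega) = \bigl([e^{\tau\LLs},\, u\cdot\nabla]\omega,\, e^{\tau\LLs}\omega\bigr),
\]
since $(u\cdot\nabla e^{\tau\LLs}\omega,\, e^{\tau\LLs}\omega) = 0$. Passing to Fourier series, each summand of the commutator picks up a factor $e^{\tau|k|^{1/s}} - e^{\tau|l|^{1/s}}$, where $k = j+l$ is the standard frequency decomposition. The key elementary bound
\[
\bigl| e^{\tau|k|^{1/s}} - e^{\tau|l|^{1/s}} \bigr| \leq \tau\, |j|^{1/s}\, e^{\tau|j|^{1/s}}\, e^{\tau|l|^{1/s}},
\]
valid for $s\geq 1$ by the mean value theorem combined with the sub-additivity of $r\mapsto r^{1/s}$, is the main analytic tool. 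In order to distribute the fractional derivative $|j|^{1/s}$ symmetrically between the two Gevrey copies of $\hat\omega$ on the right-hand side, I would further use $|j|^{1/s} \leq |j|^{1/(2s)}(|k|^{1/(2s)} + |l|^{1/(2s)})$, again by sub-additivity. A Young convolution inequality in $\ell^{2}(\Zd)$, together with the Biot--Savart bound $|j|\,|\hat u(j)| \leq C(1+\aalpha|j|^2)^{-1}|\hat\omega(j)|$, would then close~\eqref{eq:convection:term}; the factor $1/\alpha$ appears via a Cauchy--Schwarz absorption of the multiplier $(1+\aalpha|\cdot|^2)^{-1}$.

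For the stretching term, the divergence-free cancellation is unavailable, and I would instead split
\[
e^{\tau\LLs}(\omega\cdot\nabla u) = (e^{\tau\LLs}\omega)\cdot\nabla u + \bigl( e^{\tau\LLs}(\omega\cdot\nabla u) - (e^{\tau\LLs}\omega)\cdot\nabla u \bigr),
\]
pairing the result with $e^{\tau\LLs}\omega$. The first piece is bounded by $\|\nabla u\|_{L^\infty}\|e^{\tau\LLs}\omega\|_{L^2}^2$; combining the Sobolev embedding $H^{3}(\T3)\hookrightarrow W^{1,\infty}$ with~\eqref{eq:Kestimate} gives $\|\nabla u\|_{L^\infty} \leq (C/\alpha)\|\omega\|_{L^2}$, which produces the $\tau$-free term in~\eqref{eq:stretching:term}. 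The second piece has Fourier symbol $e^{\tau|k|^{1/s}} - e^{\tau|j|^{1/s}}$, is bounded by exactly the same argument as in the convection case (with the roles of $j$ and $l$ swapped in the key elementary inequality), and contributes the $\tau$-dependent part.

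The only delicate point, and the main bookkeeping obstacle, is matching the derivative and $\alpha$ budgets on the right-hand side precisely. One must arrange the sub-additive splittings so that exactly $|j|^{1/(2s)}$ or $|l|^{1/(2s)}$ is placed on each of the two copies of $\hat\omega$, and one must exploit the full $(1+\aalpha|j|^2)^{-1}$ smoothing coming from the $(I-\aalpha\Delta)^{-1}$ factor in $\mathcal{K}_\alpha$ to extract $1/\alpha$ rather than some weaker negative power. Once these are in place, the remaining work is standard convolution and Cauchy--Schwarz manipulations on $\ell^{2}(\Zd)$.
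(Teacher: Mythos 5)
Your overall skeleton matches the paper's: the commutator reduction using $\ddiv u=0$, the Fourier/Young estimates with the $u$-factor summed in $\ell^1$, the smoothing of ${\mathcal K}_\alpha$ to produce the $1/\alpha$, and the splitting of the stretching term into a $\tau$-free piece plus a $\tau$-piece. However, there is a genuine gap in the derivative bookkeeping for the convection term. In the commutator sum the symbol carries, besides the difference of exponentials, a \emph{full} derivative coming from $\nabla\omega$: with $j+k=l$ ($j$ the $u$-frequency) the factor is $\hat u_j\cdot k$, i.e.\ a factor $|k|$ sitting on an $\omega$-copy (and since $\hat u_j\cdot j=0$ it can be viewed as $|k|$ or $|l|$, but never placed on $u$). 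Your key bound $|e^{\tau|l|^{1/s}}-e^{\tau|k|^{1/s}}|\le\tau|j|^{1/s}e^{\tau|j|^{1/s}}e^{\tau|k|^{1/s}}$ only extracts the harmless factor $|j|^{1/s}$ on the $u$-frequency and leaves the full $|k|$ untouched, and your subsequent splitting $|j|^{1/s}\le|j|^{1/(2s)}\bigl(|k|^{1/(2s)}+|l|^{1/(2s)}\bigr)$ is applied to the wrong factor: it \emph{adds} derivatives to the $\omega$-copies. Following your scheme one only reaches a bound of the form $\frac{C\tau}{\alpha}\Vert e^{\tau\Lambda^{1/s}}\omega\Vert_{L^2}^2\Vert \Lambda e^{\tau\Lambda^{1/s}}\omega\Vert_{L^2}$, with a full derivative on $\omega$; this is not \eqref{eq:convection:term}, and it cannot be absorbed by the $-\dot\tau\,\Vert\Lambda^{1/(2s)}e^{\tau\Lambda^{1/s}}\omega\Vert_{L^2}^2$ term in the proof of Theorem~\ref{thm:3d:small}, so the argument does not close.

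The missing ingredient is the refined commutator estimate the paper uses: by the mean value theorem,
\begin{align*}
\bigl|e^{\tau|l|^{1/s}}-e^{\tau|k|^{1/s}}\bigr|\le \tau\,\bigl||l|^{1/s}-|k|^{1/s}\bigr|\,e^{\tau\max\{|k|,|l|\}^{1/s}}\le C\tau\,\frac{|j|}{|k|^{1-1/s}+|l|^{1-1/s}}\,e^{\tau|j|^{1/s}}e^{\tau|k|^{1/s}},
\end{align*}
whose denominator converts the full derivative $|k|$ coming from $\nabla\omega$ into $|k|^{1/s}$; one then splits $|k|^{1/s}\le |k|^{1/(2s)}\bigl(|j|^{1/(2s)}+|l|^{1/(2s)}\bigr)$, so each $\omega$-copy carries at most $\Lambda^{1/(2s)}$ while the $u$-copy carries at most $|j|^{1+1/(2s)}$, which is summed in $\ell^1$ and converted via $\Vert\Lambda^{3+1/(2s)}e^{\tau\Lambda^{1/s}}u\Vert_{L^2}\le \frac{C}{\alpha}\Vert\Lambda^{1/(2s)}e^{\tau\Lambda^{1/s}}\omega\Vert_{L^2}$. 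Note the issue persists even for $s=1$: there the exponential difference does give only $\tau|j|$, but the full $|k|$ must still be split as $|k|^{1/2}(|j|^{1/2}+|l|^{1/2})$ --- it is the derivative from $\nabla\omega$, not the commutator factor, that needs distributing. The same correction affects your claim that the $\tau$-part of the stretching term follows ``by exactly the same argument''; there, however, the derivative falls on $u$, which enjoys the $(1+\alpha^2|k|^2)^{-1}$ smoothing, so your plan is fixable (the paper in fact uses no commutator for this term, just $e^{\tau|l|^{1/s}}\le e^{\tau|j|^{1/s}}e^{\tau|k|^{1/s}}$ and $e^{x}\le 1+xe^{x}$), and your $\Vert\nabla u\Vert_{L^\infty}\le \frac{C}{\alpha}\Vert\omega\Vert_{L^2}$ treatment of the $\tau$-free piece is equivalent to the paper's direct $\ell^1$-Fourier bound.
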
The proof of the above lemma is similar to \cite[Lemma~8]{LO}, but for the sake of completeness a sketch is given in the Appendix (cf.~Section~\ref{app:new}).

The smallness condition \eqref{thm:3d:condition} implies via the Sobolev and Poincar\'e inequalities that $\Vert \nabla u_0 \Vert_{L^\infty} \leq \nu/(2 + 2 \aalpha)$, if $\kappa$ is chosen sufficiently large. Let $\gamma = \nu/(2+2 \aalpha)$. It follows from standard energy inequalities that $\Vert \omega(t) \Vert_{L^2} \leq \Vert \omega_0 \Vert_{L^2} e^{-\gamma t/2}\leq \Vert \omega_0 \Vert_{L^2}$. Combining this estimate with \eqref{eq::ode1}, \eqref{eq:convection:term}, and \eqref{eq:stretching:term}, we obtain
\begin{align}
\frac 12 \frac{d}{dt} \Vert e^{\tau \LLs} \omega \Vert_{L^2}^2 + \gamma \Vert e^{\tau \LLs} \omega \Vert_{L^2}^2 \leq  \left( \dot{\tau} + \frac{C\tau}{\alpha} \Vert e^{\tau \LLs} \omega \Vert_{L^2} \right)  \Vert \Lambda^{1/2s} e^{\tau \LLs} \omega \Vert_{L^2}^2,
\end{align}where we have used that $\kappa$ was chosen sufficiently large, i.e., $\kappa \geq C$. The above {a-priori} estimate gives the global in time Gevrey-class $s$ solution $\omega(t) \in {\mathcal D}(e^{\tau(t) \LLs})$, if the radius of Gevrey-class regularity $\tau(t)$ is chosen such that
\begin{align}
\dot{\tau} + \frac{C \tau}{\alpha } \Vert e^{\tau \LLs} \omega \Vert_{L^2} \leq 0.
\end{align}
Since under this condition we have
\begin{align*}
  \Vert e^{\tau(t) \LLs} \omega(t) \Vert_{L^2} \leq \Vert e^{\tau_0 \LLs} \omega_0 \Vert_{L^2} e^{-\gamma t/2}
\end{align*}for all $t\geq 0$, it is sufficient to let $\tau(t)$ be such that
$\dot{\tau} + C M_0  e^{-\gamma t/2} \tau /\alpha= 0$, where we let $M_0 = \Vert e^{\tau_0 \LLs} \omega_0 \Vert_{L^2}$. We obtain
\begin{align}
  \tau(t) = \tau_0 e^{-C M_0 \int_{0}^{t}  e^{-\gamma s/2} ds / \alpha},
\end{align}and in particular the radius of analyticity does not vanish as $t\rightarrow \infty$, since it is bounded as
\begin{align}
  \tau(t) \geq \tau_0 e^{-2 C M_0 / (\gamma \alpha)} = \tau_0 e^{-C M_0 (4 + 4\aalpha) / (\nu \alpha)},
\end{align} for all $t\geq 0$, thereby concluding the proof of Theorem~\ref{thm:3d:small}. \end{proof}

\subsection{Large initial data}\label{sec:3d:big}
The main theorem of this section deals with the case of large initial data, where only the local in time existence of solutions is known (cf.~\cite{CiG,CiO}). We prove the persistence of Gevrey-class regularity: as long as the solution exists does not blow-up in the Sobolev norm, it does not blow-up in the Gevrey-class norm. Similarly to the Euler equations, the finite time blow-up remains an open problem.
\begin{theorem}\label{thm:3d:big}
  Fix $\nu ,\alpha > 0$, and assume that $\omega_0$ is of Gevrey-class $s$, for some $s\geq 1$. Then the unique solution $\omega(t) \in C( [0,T^*);L^2(\T3))$ to \eqref{eq:w1}--\eqref{eq:w3} is of Gevrey-class $s$ for all $t<T^*$, where $T^*\in(0,\infty]$ is the maximal time of existence of the Sobolev solution. Moreover, the radius $\tau(t)$ of Gevrey-class $s$ regularity of the solution is bounded from below as
  \begin{align}
  \llabel{eq:thm1}
    \tau(t) \geq \frac{\tau_0}{C_0} e^{- C \int_{0}^{t} \Vert \nabla u(s) \Vert_{L^\infty} ds},
  \end{align}
  where $C>0$ is a dimensional constant, and $C_0>0$ has additional explicit dependence on the initial data, $\alpha$, and $\nu$ via \eqref{eq:C0def*} below.
\end{theorem}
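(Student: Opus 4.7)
\medskip

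\noindent\textbf{Proof proposal.} The plan is to adapt the Kukavica--Vicol strategy used for the Euler equations to the present setting. In contrast to the small-data three-dimensional result (Theorem~\ref{thm:3d:small}), one cannot expect $\|\omega(t)\|_{L^2}$ to decay, so the exponential-weight analysis must be carried out with a lower-order Fourier multiplier whose symbol is Lipschitz and bounded, namely the truncated operators $\Lambda_m$: the Fourier multiplier with symbol $\sigma_m(k)=\min\{|k|,m\}$. I would work at the level of the vorticity equation $\partial_t\omega+\nu\mathcal{R}_\alpha\omega+(u\cdot\nabla)\omega=(\omega\cdot\nabla)u$, take the inner product with $e^{2\tau\Lambda_m^{1/s}}\omega$, and obtain
\begin{align*}
\tfrac12\tfrac{d}{dt}\|e^{\tau\Lambda_m^{1/s}}\omega\|_{L^2}^2-\dot\tau\|\Lambda_m^{1/(2s)}e^{\tau\Lambda_m^{1/s}}\omega\|_{L^2}^2+\tfrac{\nu}{1+\aalpha}\|e^{\tau\Lambda_m^{1/s}}\omega\|_{L^2}^2\le |N_1|+|N_2|,
\end{align*}
where $N_1=\langle e^{\tau\Lambda_m^{1/s}}(u\cdot\nabla\omega),e^{\tau\Lambda_m^{1/s}}\omega\rangle$ and $N_2=\langle e^{\tau\Lambda_m^{1/s}}(\omega\cdot\nabla u),e^{\tau\Lambda_m^{1/s}}\omega\rangle$. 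The dissipative term from $\mathcal{R}_\alpha$ is here only for book-keeping; in contrast to the small-data case, it will not be used to absorb any nonlinear contribution.

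\smallskip
The key step is a commutator-type estimate in the Kukavica--Vicol spirit. Using that $\sigma_m^{1/s}$ is Lipschitz with constant one, one writes $e^{\tau\Lambda_m^{1/s}}(fg)-f\,e^{\tau\Lambda_m^{1/s}}g$ on the Fourier side and exploits the inequality $|e^{\tau\sigma_m(k)^{1/s}}-e^{\tau\sigma_m(k-j)^{1/s}}|\le C\tau|j|^{1/s}e^{\tau\sigma_m(j)^{1/s}}e^{\tau\sigma_m(k-j)^{1/s}}$ to separate a "local'' piece and a "commutator'' piece gaining an extra power $\tau\,|\cdot|^{1/(2s)}$ on each factor. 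Together with the Biot--Savart bound $u=\mathcal{K}_\alpha\omega$ (controlling $\|\nabla u\|_{L^\infty}$ on low frequencies) and the divergence-free condition, this yields, after integration by parts in $N_1$ and use of the Sobolev embedding where appropriate, estimates of the form
\begin{align*}
|N_1|+|N_2|\le C\|\nabla u\|_{L^\infty}\|e^{\tau\Lambda_m^{1/s}}\omega\|_{L^2}^2+C\tau\|\Lambda_m^{1/(2s)}e^{\tau\Lambda_m^{1/s}}\omega\|_{L^2}^2\|\Lambda_m^{1/(2s)}e^{\tau\Lambda_m^{1/s}}\omega\|_{L^2}\,\Theta(\omega),
\end{align*}
where $\Theta(\omega)$ is an $m$-uniform quantity controlled by $\|e^{\tau\Lambda_m^{1/s}}\omega\|_{L^2}$. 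The free factor of $\tau$ in the second term is crucial: it allows us to prescribe
\begin{align*}
\dot\tau(t)+C\tau(t)\,\Theta(\omega(t))=0,
\end{align*}
which absorbs the "bad'' half-derivative term into the radius-decay mechanism.

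\smallskip
With this choice of $\tau$, the energy inequality collapses to $\tfrac{d}{dt}\|e^{\tau\Lambda_m^{1/s}}\omega\|_{L^2}^2\le 2C\|\nabla u\|_{L^\infty}\|e^{\tau\Lambda_m^{1/s}}\omega\|_{L^2}^2$, so Gr\"onwall's inequality gives
\begin{align*}
\|e^{\tau(t)\Lambda_m^{1/s}}\omega(t)\|_{L^2}\le \|e^{\tau_0\Lambda_m^{1/s}}\omega_0\|_{L^2}\,\exp\!\Big(C\!\int_0^t\!\|\nabla u(s)\|_{L^\infty}\,ds\Big),
\end{align*}
uniformly in $m$, on any subinterval of $[0,T^*)$. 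Returning to the ODE $\dot\tau=-C\tau\Theta(\omega)$ with $\Theta$ bounded by the right-hand side above, a direct integration yields the claimed lower bound
$\tau(t)\ge(\tau_0/C_0)\exp(-C\int_0^t\|\nabla u(s)\|_{L^\infty}\,ds)$, with $C_0$ depending explicitly on $\|e^{\tau_0\Lambda^{1/s}}\omega_0\|_{L^2}$, $\alpha$ and $\nu$ through the coefficient entering $\Theta$; this defines the constant $C_0$ of \eqref{eq:C0def*}. Finally, one passes $m\to\infty$ by monotone convergence on the Fourier side to recover the statement for the untruncated operator $\Lambda$; rigor is supplied by a Galerkin approximation at the $L^2$ level, which is standard given that $T^*$ is the Sobolev existence time.

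\smallskip
The main obstacle is the vortex-stretching term $N_2$, which has no analog in two dimensions and which one cannot remove by the weak dissipation $\mathcal{R}_\alpha$. The delicate point is to bound it so that the only factor not controlled by the Gevrey norm of $\omega$ itself is $\|\nabla u\|_{L^\infty}$; a naive estimate would instead produce $\|\Lambda u\|_{L^\infty}$ or higher, which cannot be closed against $\int_0^t\|\nabla u\|_{L^\infty}$. Resolving this requires placing the commutator on the rough factor and using the Biot--Savart relation $u=\mathcal{K}_\alpha\omega$ (and the uniform bound \eqref{eq:Restimate}) with care, which is exactly where the truncation $\Lambda_m$ pays off by providing a Lipschitz symbol and a bounded exponential weight.
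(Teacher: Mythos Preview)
Your proposal has a genuine gap in the final step, and it stems from a misreading of the operator $\Lambda_m$. In the paper $\Lambda_m$ is the Fourier multiplier with symbol $|k_m|$, the absolute value of the $m$-th \emph{coordinate} of $k$; it is not a truncation $\min\{|k|,m\}$. The point of this anisotropic choice is not to regularize but to make the one-dimensional identity $|j_m+k_m|-|k_m|=j_m\sgn(k_m)+\text{(remainder supported where }|k_m|\le|j_m|\text{)}$ available, which is precisely what allows one to undo the Fourier transform and extract $\|\nabla u\|_{L^\infty}$ (rather than a Sobolev or Gevrey norm of $u$) from the leading piece of the commutator. With your truncated symbol this identity is not available, and the commutator estimate you sketch does not produce the claimed $C\|\nabla u\|_{L^\infty}\|e^{\tau\Lambda_m^{1/s}}\omega\|_{L^2}^2$ leading term in any obvious way.

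Even granting your nonlinear estimate, the $\tau$-ODE you write, $\dot\tau+C\tau\,\Theta(\omega)=0$ with $\Theta$ controlled by the Gevrey norm, does \emph{not} integrate to the stated bound. It gives $\tau(t)=\tau_0\exp\bigl(-C\int_0^t\Theta\bigr)$, and since you have already shown $\Theta(s)\le M_0\exp\bigl(C\int_0^s\|\nabla u\|_{L^\infty}\bigr)$, you obtain a \emph{double} exponential in $\int_0^t\|\nabla u\|_{L^\infty}$, not $\tau_0 C_0^{-1}\exp(-C\int_0^t\|\nabla u\|_{L^\infty})$. The paper avoids this by arranging the nonlinear estimate so that the $Y_{s,\tau}$-coefficient has the structure $C\tau\|\nabla u\|_{L^\infty}+C\tau^2\alpha^{-1}(\|\omega\|_{H^1}+\|\omega\|_{X_{s,\tau}})$: the $\tau$-linear part carries only $\|\nabla u\|_{L^\infty}$, while the Gevrey and $H^1$ norms sit behind $\tau^2$. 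This yields a Bernoulli-type ODE whose solution has the form $\tau(t)=M(t)^{-1}\bigl(\tau_0^{-1}+\int_0^t(\cdots)M(s)^{-1}ds\bigr)^{-1}$ with $M(t)=\exp(C\int_0^t\|\nabla u\|_{L^\infty})$, so the higher norms enter only \emph{algebraically}. Finally, contrary to your claim that the dissipation is ``only for book-keeping'', the factor $e^{-2\gamma t}$ coming from $\nu\mathcal R_\alpha$ is exactly what makes those time integrals converge to a finite $C_0$; without it the algebraic prefactor would not be bounded uniformly in $t$.
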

We note that the radius of Gevrey-class regularity is expressed in terms of $ \Vert \nabla u \Vert_{L^\infty}$, as opposed to an exponential in terms of higher Sobolev norms of the velocity. Hence Theorem~\ref{thm:3d:big} may be viewed as a blow-up criterion: if the initial data is of Gevrey-class $s$ (its Fourier coefficients decay at the exponential rate $e^{-\tau_0 |k|^{1/s}}$), and at time $T_*$ the Fourier coefficients of the solution $u(T_*)$ do not decay sufficiently fast, then the solution blows up at $T_*$.

To prove Theorem~\ref{thm:3d:big}, let us first introduce the functional setting. For fixed $s\geq 1$, $\tau \geq 0$, and $m\in \{1,2,3\}$, we define via the Fourier transform the space
\begin{align*}
{\mathcal D}(\Lambda_m e^{\tau \Lms}) = \biggl\{ \omega\in C^\infty(\Td) &: \ddiv \omega = 0, \int_{\Td} \omega = 0,\\ &\left\Vert{\Lambda_m e^{\tau \Lms} \omega}\right\Vert_{L^2}^2 = (2\pi)^d \sum\limits_{k\in {{\mathbb
Z}^d}} |k_m|^2 e^{2\tau |k_m|^{1/s}} |\hat{\omega}_k|^2 <
\infty\biggr\},
\end{align*}
where $\Fw_k$ is the $k^{th}$ Fourier coefficient of $\omega$, and $\Lambda_m$ is the Fourier-multiplier operator with symbol $|k_m|$. For $s,\tau$ as before, also define the normed
spaces $Y_{s,\tau}\subset X_{s,\tau}$ by
\begin{align}
  X_{s,\tau} = \bigcap\limits_{m=1}^{3} {\mathcal D}(\Lambda_m e^{\tau \Lms}),
\qquad \left\Vert \omega \right\Vert_{X_{s,\tau}}^2
=\sum\limits_{m=1}^{3} \left\Vert{\Lambda_m e^{\tau \Lms} \omega}\right\Vert_{L^2}^2, \llabel{eq:Xdef}
\end{align}
and
\begin{align}
  Y_{s,\tau} = \bigcap\limits_{m=1}^{3} {\mathcal D}(\Lambda_{m}^{1+s/2} e^{\tau \Lms}),
\qquad \left\Vert \omega \right\Vert_{Y_{s,\tau}}^2
=\sum\limits_{m=1}^{3} \left\Vert{\Lambda_{m}^{1+s/2} e^{\tau \Lms} \omega}\right\Vert_{L^2}^2,\llabel{eq:Ydef}
\end{align}
It follows from the triangle inequality that if $\omega \in X_{s,\tau}$ then $\omega$ is a function of Gevrey-class $s$, with radius proportional to $\tau$ (up to a dimensional constant). If instead of the $X_{s,\tau}$ norm we use $\Vert \LL e^{\tau \LL^{1/s}} \omega \Vert_{L^2}$ (cf.~\cite{CF,LO}), then the lower bound for the radius of Gevrey-class regularity will decay exponentially in $\Vert \omega \Vert_{H^1}$ (i.e., a higher Sobolev norm of the velocity). It was shown in \cite{KV} that using the spaces $X_{s,\tau}$ it is possible give lower bounds on $\tau$ that depend algebraically on the higher Sobolev norms of $u$, and exponentially on $\Vert \nabla u(t) \Vert_{L^\infty}$, which in turn gives a better estimate on the analyticity radius.

\begin{proof}[Proof of Theorem~\ref{thm:3d:big}]
Assume that the initial datum $\omega_0$ is of Gevrey-class $s$, for some $s\geq 1$, with $\omega_0 \in Y_{s,\tau_0}$, for some $\tau_0 = \tau(0) > 0$. We take the $L^2$-inner product of \eqref{eq:w1} with $\Lambda_{m}^{2} e^{2\tau(t) \Lms} \omega(t)$ and obtain
\begin{align*}
  (\partial_t \omega, \Lambda_{m}^{2} e^{2\tau \Lms} \omega) + \nu ( {\mathcal R}_\alpha \omega, \Lambda_{m}^{2} e^{2\tau \Lms} \omega) + (u \cdot \nabla \omega, \Lambda_{m}^{2} e^{2\tau \Lms} \omega) - (\omega \cdot \nabla u,\Lambda_{m}^{2} e^{2\tau \Lms} \omega).
\end{align*}For convenience we have omitted the time dependence of $\tau$ and $\omega$. The above implies
\begin{align}\llabel{eq:ode1}
 (\partial_t \Lambda_m e^{\tau \Lms} \omega, \Lambda_m e^{\tau \Lms} \omega)  &- \dot{\tau} (\Lambda_{m}^{1+s/2} e^{\tau \Lms} \omega, \Lambda_{m}^{1+s/2} e^{\tau \Lms} \omega) + \nu ( {\mathcal R}_\alpha \Lambda_m e^{\tau \Lms} \omega, \Lambda_m e^{\tau \Lms} \omega) \notag\\
  & \qquad \qquad =   -(u \cdot \nabla \omega, \Lambda_{m}^{2} e^{2\tau \Lms} \omega) + (\omega \cdot \nabla u,\Lambda_{m}^{2} e^{2\tau \Lms} \omega).
\end{align} Note that the Fourier multiplier symbol of the operator ${\mathcal R}_\alpha$ is an increasing function of $|k|\geq 1$, and therefore by Plancherel's thorem and Parseval's identity we have
\begin{align*}
  ( {\mathcal R}_\alpha \Lambda_m e^{\tau \Lms} \omega, \Lambda_m e^{\tau \Lms} \omega) &= (2\pi)^3 \sum\limits_{k\in {\mathbb Z}^3 \setminus \{ 0\}} \frac{|k|^2}{1+\aalpha |k|^2} |k_m|^2 |\Fw_k|^2 e^{2\tau |k|^s}\\
  &\geq \frac{(2\pi)^3}{1+\aalpha}  \sum\limits_{k\in {\mathbb Z}^3 \setminus \{ 0\}} |k_m|^2 |\Fw_k|^2 e^{2\tau |k|^s} = \frac{1}{1+\aalpha}\Vert\Lambda_m e^{\tau \Lms} \omega \Vert_{L^2}^2 .
\end{align*} The above estimate combined with \eqref{eq:ode1} gives for all $m\in \{1,2,3\}$, the {\it a-priori} estimate
\begin{align}\llabel{eq:ode2}
\frac 12 \frac{d}{dt} \Vert \Lambda_m e^{\tau \Lms} \omega \Vert_{L^2}^2 + \frac{\nu}{1+\aalpha} \Vert \Lambda_m e^{\tau \Lms} \omega \Vert_{L^2}^2 - \dot{\tau} \Vert \Lambda_{m}^{1+s/2} e^{\tau \Lms}\omega \Vert_{L^2}^2 \leq T_1 + T_2,
\end{align} where we have denoted
\begin{align}
  T_1 = \left| (u \cdot \nabla \omega, \Lambda_{m}^{2} e^{2\tau \Lms} \omega) \right|,\ \mbox{and}\
  T_2 = \left| (\omega \cdot \nabla u, \Lambda_{m}^{2} e^{2\tau \Lms} \omega) \right|. \llabel{eq:Tdef}
\end{align}
The convection term $T_1$, and the vorticity stretching term $T_2$ are estimated using the fact that $\ddiv u =0$, and that $u = {\mathcal K}_\alpha \omega$.
\begin{lemma}\label{lemma:KV}
For all $m\in \{1,2,3\}$ and $\omega \in Y_{s,\tau}$, we have
\begin{align}
T_1 + T_2 &\leq C \left\Vert{\nabla u}\right\Vert_{L^\infty}
\left\Vert \omega \right\Vert_{X_{s,\tau}}^2 + \frac{C}{\alpha} (1+\tau) \left\Vert
\omega \right\Vert_{H^1}^2 \left\Vert \omega \right\Vert_{X_{s,\tau}}  \notag \\
&+ \left(C \tau \left\Vert{\nabla u}\right\Vert_{L^\infty} + \frac{C \tau^2}{\alpha} \left\Vert \omega \right\Vert_{H^1} + \frac{C \tau^2}{\alpha} \left\Vert \omega \right\Vert_{X_{s,\tau}}\right) \left\Vert \omega \right\Vert_{Y_{s,\tau}}^2, \llabel{eq:mainlemma}
\end{align}
where $C>0$ is a dimensional constant.
\end{lemma}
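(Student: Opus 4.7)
The plan is to pass to Fourier variables, where $\Lambda_m e^{\tau \Lms}$ acts by the multiplier $|k_m|e^{\tau|k_m|^{1/s}}$, and exploit the sub-additivity $|k_m|^{1/s}\leq |j_m|^{1/s}+|l_m|^{1/s}$ (valid for $s\geq 1$ whenever $k=j+l$) to split product interactions into low-high and high-low frequency regimes. First, for the convection term $T_1$, I would exploit the divergence-free condition on $u$: writing $v = \Lambda_m e^{\tau \Lms}\omega$, the identity
\begin{equation*}
  \langle \Lambda_m e^{\tau \Lms}(u\cdot \nabla\omega), v\rangle = \langle u\cdot\nabla v, v\rangle + \langle [\Lambda_m e^{\tau \Lms}, u\cdot\nabla]\omega, v\rangle
\end{equation*}
holds, and the first term vanishes componentwise by integration by parts. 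The commutator symbol is $i(\hat u_j\cdot l)\bigl(|k_m|e^{\tau|k_m|^{1/s}} - |l_m|e^{\tau|l_m|^{1/s}}\bigr)$ with $k=j+l$, which I split as $(|k_m|-|l_m|)e^{\tau|k_m|^{1/s}} + |l_m|\bigl(e^{\tau|k_m|^{1/s}} - e^{\tau|l_m|^{1/s}}\bigr)$.

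For the two pieces I would use the elementary bounds $\bigl||k_m|-|l_m|\bigr|\leq |j_m|$ and, via the mean value theorem and sub-additivity, $\bigl|e^{\tau|k_m|^{1/s}}-e^{\tau|l_m|^{1/s}}\bigr|\leq C\tau|j_m|^{1/s}\bigl(e^{\tau|k_m|^{1/s}}+e^{\tau|l_m|^{1/s}}\bigr)$. Applying sub-additivity once more to separate the remaining exponentials, every summand factors into three pieces attached respectively to $\hat u_j$, $\hat \omega_l$, and $\overline{\hat\omega_k}$. Cauchy--Schwarz and a split into the regimes $|j|\lesssim |l|$ and $|l|\lesssim|j|$ then give: in the low-frequency-$u$ regime, the derivative on $u$ is absorbed by $\Vert\nabla u\Vert_{L^\infty}$ and one obtains $C\Vert\nabla u\Vert_{L^\infty}\Vert\omega\Vert_{X_{s,\tau}}^2$ from the non-exponential piece and $C\tau\Vert\nabla u\Vert_{L^\infty}\Vert\omega\Vert_{Y_{s,\tau}}^2$ from the exponential piece; in the high-frequency-$u$ regime, the loss of derivative on $u$ is absorbed via $u=\mathcal K_\alpha\omega$ and the bound \eqref{eq:Kestimate}, yielding the terms $\tfrac{C}{\alpha}\Vert\omega\Vert_{H^1}^2\Vert\omega\Vert_{X_{s,\tau}}$ and its $\tau$-companion $\tfrac{C\tau}{\alpha}\Vert\omega\Vert_{H^1}^2\Vert\omega\Vert_{X_{s,\tau}}$, together with the fully-Gevrey contribution $\tfrac{C\tau^2}{\alpha}\Vert\omega\Vert_{X_{s,\tau}}\Vert\omega\Vert_{Y_{s,\tau}}^2$. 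The stretching term $T_2$ admits no such cancellation, but after the identical Fourier decomposition of $\omega\cdot\nabla u$ the factor $|l|$ coming from $\nabla u$ is again absorbed via $u=\mathcal K_\alpha\omega$ and \eqref{eq:Restimate}, producing the analogous contributions $C\Vert\nabla u\Vert_{L^\infty}\Vert\omega\Vert_{X_{s,\tau}}^2$, $\tfrac{C}{\alpha}(1+\tau)\Vert\omega\Vert_{H^1}^2\Vert\omega\Vert_{X_{s,\tau}}$, and the residual Gevrey term $\tfrac{C\tau^2}{\alpha}\Vert\omega\Vert_{H^1}\Vert\omega\Vert_{Y_{s,\tau}}^2$. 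Adding the contributions from $T_1$ and $T_2$ yields \eqref{eq:mainlemma}.

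The main obstacle I foresee is the frequency-regime bookkeeping: in each subregion of $\{(j,l):k=j+l\}$ one must carefully track how the four weights $|k_m|$, $|j_m|$, $|l_m|$ and the $e^{\tau|\cdot|^{1/s}}$ factors recombine so that after Cauchy--Schwarz each surviving factor attaches to the correct norm ($X_{s,\tau}$, $Y_{s,\tau}$, $H^1$, or $L^\infty$). In particular, identifying which occurrences of the extra $|j_m|^{1/s}$ produced by the exponential commutator must be paid for by $\Vert\omega\Vert_{Y_{s,\tau}}$ rather than by $\Vert\nabla u\Vert_{L^\infty}$ is exactly what determines the correct matching of $\tau^0$, $\tau^1$, $\tau^2$ powers against each norm product on the right-hand side of \eqref{eq:mainlemma}; getting this pairing wrong would either lose the $\alpha^{-1}$ gain or force a derivative loss that $Y_{s,\tau}$ cannot absorb.
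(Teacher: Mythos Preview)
Your decomposition and commutator setup are correct, but there is a genuine gap in the step where you claim to recover $\Vert\nabla u\Vert_{L^\infty}$. After passing to Fourier variables and bounding everything in absolute value, the ``low-frequency-$u$'' contribution looks like
\[
\sum_{j+k=l}\,|j_m|\,|\hat u_j|\;|k|\,|\hat\omega_k|\,e^{\tau|k_m|^{1/s}}\;|l_m|\,|\hat\omega_l|\,e^{\tau|l_m|^{1/s}},
\]
and after Cauchy--Schwarz the factor attached to $u$ is $\sum_j|j_m|\,|\hat u_j|$ (or an $\ell^1$-type quantity on $\widehat{\nabla u}$), which is \emph{not} controlled by $\Vert\nabla u\Vert_{L^\infty}$; it is only controlled by a Sobolev norm of $u$ at regularity above $d/2+1$, i.e.~by $\Vert\omega\Vert_{H^1}$ via $u=\mathcal K_\alpha\omega$. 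With that route you end up with $C\Vert\omega\Vert_{H^1}\Vert\omega\Vert_{X_{s,\tau}}^2$ rather than $C\Vert\nabla u\Vert_{L^\infty}\Vert\omega\Vert_{X_{s,\tau}}^2$, and the whole point of the lemma---pairing $\Vert\nabla u\Vert_{L^\infty}$ with $\tau^0$ and $\tau^1$ while relegating $\Vert\omega\Vert_{H^1}$ to the $\tau^2$ terms---is lost.

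The missing idea is that the leading pieces of the commutator must \emph{not} be estimated in absolute value. The paper uses the one-dimensional algebraic identity
\[
|j_m+k_m|-|k_m|\;=\;j_m\,\sgn(k_m)\;+\;2(j_m+k_m)\,\sgn(j_m)\,\chi_{\{\sgn(j_m+k_m)\sgn(k_m)=-1\}},
\]
so that the first summand, inserted back into the trilinear Fourier sum, resums exactly to the physical-space expression $\bigl(\partial_m u\cdot\nabla H_m e^{\tau\Lambda_m^{1/s}}\omega,\;\Lambda_m e^{\tau\Lambda_m^{1/s}}\omega\bigr)$, where $H_m$ is the bounded multiplier with symbol $\sgn(k_m)$. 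Only at this point can H\"older in physical space produce $\Vert\nabla u\Vert_{L^\infty}$. The correction term is supported where $|k_m|\leq|j_m|$, so it is harmless and falls into the $\alpha^{-1}\Vert\omega\Vert_{H^1}$ bucket. An analogous maneuver is needed for the $\tau$-linear piece (the paper's $T_{13}$), where one first writes $|l_m|^{1/s}-|k_m|^{1/s}=\tfrac1s(|l_m|-|k_m|)|k_m|^{1/s-1}+\text{(lower order)}$ and applies the same identity to $|l_m|-|k_m|$. Without this return-to-physical-space step, your argument can only yield the weaker estimate with $\Vert\omega\Vert_{H^1}$ in place of $\Vert\nabla u\Vert_{L^\infty}$, which is precisely what the $X_{s,\tau}$ machinery was designed to avoid.
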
This lemma in the context of the Euler equations was proven by Kukavica and Vicol \cite[Lemma 2.5]{KV}, but for the sake of completeness we sketch the proof in the Appendix (cf. Section~\ref{app:KV}). The novelty of this lemma is that the term $\Vert  \nabla u \Vert_{L^\infty}$ is paired with $\tau$, while the term $\Vert \omega \Vert_{H^1}$ is paired with $\tau^2$. This gives the exponential dependence on the gradient norm and the algebraic dependence of the Sobolev norm. By summing over $m=1,2,3$ in \eqref{eq:ode2}, and using \eqref{eq:mainlemma}, we have proven the {\it a-priori} estimate
\begin{align}\llabel{eq:ode3}
\frac 12 \frac{d}{dt} \Vert \omega \Vert_{X_{s,\tau}}^2 + \frac{\nu}{1+\aalpha} \Vert \omega \Vert_{X_{s,\tau}}^2 &\leq  C \Vert \nabla u \Vert_{L^\infty} \Vert \omega \Vert_{X_{s,\tau}}^2 + \frac{C}{\alpha} (1+\tau) \left\Vert
\omega \right\Vert_{H^1}^2 \Vert \omega \Vert_{X_{s,\tau}} \notag \\
& + \left(\dot{\tau} + C\tau \left\Vert{\nabla u}\right\Vert_{L^\infty} + \frac{C \tau^2}{\alpha} \left\Vert \omega \right\Vert_{H^1} + \frac{C \tau^2}{\alpha} \left\Vert \omega \right\Vert_{X_{s,\tau}}\right) \left\Vert \omega \right\Vert_{Y_{s,\tau}}^2.
\end{align}Therefore, if the radius of Gevrey-class regularity is chosen to decay fast enough so that
\begin{align}
\dot{\tau} + C\tau \left\Vert{\nabla u}\right\Vert_{L^\infty} + \frac{C \tau^2}{\alpha} \left\Vert \omega \right\Vert_{H^1} + \frac{C \tau^2}{\alpha} \left\Vert \omega \right\Vert_{X_{s,\tau}} \leq 0,
\end{align}then for all $\nu > 0$ we have
\begin{align}
  \frac{d}{dt} \Vert \omega \Vert_{X_{s,\tau}}+2 \gamma\Vert\omega\Vert_{X_{s,\tau}}&\leq  C \Vert \nabla u \Vert_{L^\infty} \Vert \omega \Vert_{X_{s,\tau}} + \frac{C}{\alpha} (1+\tau_0) \left\Vert
\omega \right\Vert_{H^1}^2,
\end{align}
where as before $\gamma=\nu/(2+2\aalpha)$. Hence by Gr\"onwall's inequality
\begin{align}
  \Vert \omega(t) \Vert_{X_{s,\tau(t)}} \leq M(t) e^{-2\gamma t} \left( \Vert \omega_0 \Vert_{X_{s,\tau_0}} + \frac{C}{\alpha} (1+\tau_0) \int_{0}^{t} \Vert \omega(s) \Vert_{H^1}^2 e^{2\gamma s} M(s)^{-1} ds \right).\llabel{eq:Gevreynormestimate}
\end{align} where for the sake of compactness we have denoted
\begin{align*}
  M(t) = e^{C \int_{0}^{t} \Vert \nabla u(s) \Vert_{L^\infty}ds}.
\end{align*}
Thus it is sufficient to consider the Gevrey-class radius $\tau(t)$ that solves
\begin{align}
  & \dot{\tau}(t) + C\tau(t) \left\Vert{\nabla u(t)}\right\Vert_{L^\infty}+ \frac{C \tau^2(t)}{\alpha} \left\Vert \omega(t) \right\Vert_{H^1}  \notag\\
   & \qquad + \frac{C \tau^2(t)}{\alpha} M(t) e^{-2\gamma t}\left( \Vert \omega_0 \Vert_{X_{s,\tau_0}} + \frac{C}{\alpha}(1+\tau_0) \int_{0}^{t} \Vert \omega(s) \Vert_{H^1}^2 e^{2\gamma s} M(s)^{-1} ds \right) = 0.\llabel{eq:tau:ode:}
\end{align}The explicit dependence of $\tau$ is hence algebraically on $\Vert \omega \Vert_{H^1}$ and exponentially on $\Vert \nabla u \Vert_{L^\infty}$ via
\begin{align}
  \tau(t) = M(t)^{-1} \Bigg( \frac{1}{\tau_0} &+ \frac{C}{\alpha} \int_{0}^{t} \Vert \omega(s)\Vert_{H^1} M(s)^{-1} + e^{-2\gamma s} \Vert \omega_0 \Vert_{X_{s,\tau_0}}\, ds\notag\\
   & + \frac{C(1+\tau_0)}{\alpha^2} \int_{0}^{t} e^{-2\gamma s} \int_{0}^{s} \Vert \omega(s') \Vert_{H^1}^2 M(s')^{-1} e^{2\gamma s'}\, ds'\, ds\Bigg)^{-1}.\llabel{eq:tau:}
\end{align}
A more compact lower bound for $\tau(t)$ is obtained by noting that if $\nu \geq 0$ we have
\begin{align}
\Vert \omega(t) \Vert_{H^1}^2 \leq M(t) e^{-2\gamma t} \Vert \omega_0 \Vert_{H^1}^2\label{eq:sobolev:growth:bound}
\end{align}for all $t\geq 0$. Assuming \eqref{eq:sobolev:growth:bound} holds, if $\nu >0$ (and hence $\gamma > 0$), then
\begin{align}
  \tau(t) &\geq M(t)^{-1} \left( \frac{1}{\tau_0} + C  \frac{\Vert \omega_0 \Vert_{H^1} + \Vert \omega_0\Vert_{X_{s,\tau_0}}}{\alpha \gamma} + C \frac{(1+\tau_0)\Vert \omega_0 \Vert_{H^1}^2}{4\alpha^2 \gamma^2}\right)^{-1} \geq \frac{\tau_0}{C_0} M(t)^{-1},
\end{align}where the constant $C_0 = C_0 (\nu,\alpha,\tau_0,\omega_0)$ is given explicitly by
\begin{align}
  C_0 = 1 + C \tau_0 (\Vert \omega_0 \Vert_{H^1} + \Vert \omega_0\Vert_{X_{s,\tau_0}}) \frac{1+\aalpha}{\nu\alpha} + C \tau_0 (1+\tau_0) \Vert \omega_0 \Vert_{H^1}^2 \frac{(1+\aalpha)^2}{\nu^2 \alpha^2}.\label{eq:C0def*}
\end{align}
The proof of the theorem is hence complete, modulo the proof of estimate \eqref{eq:sobolev:growth:bound}, which is given in the Appendix (cf.~Section~\ref{app:sobolev:growth:bound}).
\end{proof}

\section{Applications to the damped Euler equations}\label{sec:damped-euler}\setcounter{equation}{0}

The initial value problem for the {\it damped}
Euler equations in terms of the vorticity $\omega = \curl u$ is
\begin{align}
&\partial_t \omega + \nu \omega + (u\cdot \nabla) \omega =  (\omega \cdot \nabla) u \llabel{eq:E1}\\
&u = K_d*\omega \llabel{eq:E2}\\
&\omega(0) = \omega_0 = \curl u_0\llabel{eq:E3},
\end{align}
where $K_d$ is the ${\mathbb T}^d$-periodic Biot-Savart kernel, and $\nu \geq 0 $ is a fixed positive parameter. Here  $u$ and $\omega$ are ${\mathbb T}^d$-periodic
functions with $\int_{{\mathbb T}^d} u = 0$, and $d=2,3$. When $d=2$ the vorticity is a scalar and the term on the right of \eqref{eq:E1} is absent. It is a classical result that if $d=2$, and for any $\nu \geq 0$, the initial value problem \eqref{eq:E1}--\eqref{eq:E3} has a global in time smooth solution in the Sobolev space $H^r$, with $r>2$. We refer the reader to \cite{Chemin,MB} for details. Moreover, in the case $d=3$, and $\nu >0$, if the initial data satisfies $\Vert \nabla u_0 \Vert_{L^\infty} < \nu/\kappa$ for some sufficiently large positive dimensional constant $\kappa$, if follows from standard energy estimates that \eqref{eq:E1}--\eqref{eq:E3} has a global in time smooth solution in $H^r$, with $r>5/2$.

For results concerning the analyticity and Gevrey-class regularity of \eqref{eq:E1}--\eqref{eq:E3}, with $\nu =0$, i.e. the classical incompressible Euler equations, we refer the reader to \cite{AM,BBe,BBeZ,Be,KV,LO}. Note that in this case one can construct explicit solutions (cf.~\cite{BTi,DM}) to \eqref{eq:E1}--\eqref{eq:E3} whose radius of analyticity is decaying for all time and hence vanishes as $t\rightarrow \infty$, both for $d=2$ and $d=3$. In this section we show that if $\nu > 0$, and either $d=2$, or if $d=3$ and the initial data is small compared to $\nu$, then this is not possible: there exists a positive constant such that the radius of analyticity of the solution never drops below it. The following is our main result.

\begin{theorem}\label{thm4}
Assume that $\nu>0$, and that the divergence-free $\omega_0$ is of Gevrey-class $s$, for some $s\geq 1$. If additionally, one of the following conditions is satisfied,
\begin{enumerate}
  \item $d=2$
  \item $d=3$ and $\Vert \nabla u_0 \Vert_{L^\infty} \leq \nu/\kappa$, for some sufficiently large positive constant $\kappa$,
\end{enumerate}
then there exists a unique global in time Gevrey-class $s$ solution to \eqref{eq:E1}--\eqref{eq:E3}, with $\omega(t) \in {\mathcal D}(\LL^r e^{\tau(t) \LL^{1/s}})$ for all $t\geq 0$, and moreover we have the lower bound
\begin{align}
  \tau(t) \geq \tau(0) e^{-\bar{C} \int_{0}^{t} e^{-\nu s/2} ds} \geq \tau(0) e^{-2 \bar{C} /\nu},
\end{align}where $\bar{C}>0$ is a constant depending only on $\omega_0$.
\end{theorem}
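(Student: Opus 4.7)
The plan is to mirror the schemes of Theorems~\ref{thm:2d:big} and~\ref{thm:3d:small}, taking advantage of two simplifications: first, for damped Euler the Biot--Savart relation $u=K_d*\omega$ gives $\Vert u\Vert_{H^{r+1}}\le C\Vert\omega\Vert_{H^r}$ with \emph{no} factor of $1/\alpha$, so the nonlinear estimates from Lemmas~\ref{lemma:2d:convection} and~\ref{lemma:new} carry over without the $1/\alpha$ loss; second, the damping $\nu\omega$ directly supplies the dissipative contribution $\nu\Vert\Lambda^r e^{\tau\LLs}\omega\Vert_{L^2}^2$ in the analytic-norm energy identity, which is what makes the lower bound on $\tau(t)$ uniform in time. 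Concretely, I would test \eqref{eq:E1} against $\Lambda^{2r}e^{2\tau(t)\LLs}\omega$ (with $r=0$ in two dimensions, and $r$ slightly above $5/2$ in three dimensions) to obtain
\begin{align*}
\frac12\frac{d}{dt}\Vert\Lambda^r e^{\tau\LLs}\omega\Vert_{L^2}^2-\dot\tau\Vert\Lambda^{r+1/(2s)}e^{\tau\LLs}\omega\Vert_{L^2}^2+\nu\Vert\Lambda^r e^{\tau\LLs}\omega\Vert_{L^2}^2=-\langle u\cdot\nabla\omega,\Lambda^{2r}e^{2\tau\LLs}\omega\rangle+\langle\omega\cdot\nabla u,\Lambda^{2r}e^{2\tau\LLs}\omega\rangle,
\end{align*}
with the stretching term absent when $d=2$.

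\textbf{Case $d=2$.} Here, testing \eqref{eq:E1} against $\omega$ in $L^2$ yields $\Vert\omega(t)\Vert_{L^2}\le\Vert\omega_0\Vert_{L^2}e^{-\nu t}$ immediately, since the convection term is divergence-free and the stretching term is absent. For the analytic norm I would use the (unweighted) analog of Lemma~\ref{lemma:2d:convection}, namely $|\langle u\cdot\nabla\omega,e^{2\tau\LLs}\omega\rangle|\le C\tau\Vert e^{\tau\LLs}\omega\Vert_{L^2}\Vert\Lambda^{1/(2s)}e^{\tau\LLs}\omega\Vert_{L^2}^2$, whose proof is identical except that the Biot--Savart bound $\Vert e^{\tau\LLs}u\Vert_{H^1}\le C\Vert e^{\tau\LLs}\omega\Vert_{L^2}$ replaces the ${\mathcal K}_\alpha$-bound. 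Choosing $\tau$ by $\dot\tau+C\tau\Vert e^{\tau\LLs}\omega\Vert_{L^2}=0$ absorbs the bad term into $-\dot\tau\Vert\Lambda^{1/(2s)}e^{\tau\LLs}\omega\Vert_{L^2}^2$, and leaves $\tfrac12\tfrac{d}{dt}\Vert e^{\tau\LLs}\omega\Vert_{L^2}^2+\nu\Vert e^{\tau\LLs}\omega\Vert_{L^2}^2\le 0$, so that $\Vert e^{\tau(t)\LLs}\omega(t)\Vert_{L^2}\le M_0 e^{-\nu t}$ with $M_0=\Vert e^{\tau_0\LLs}\omega_0\Vert_{L^2}$. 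Plugging back into the defining ODE for $\tau$ gives
\begin{align*}
\tau(t)\ge\tau_0\exp\Bigl(-CM_0\int_0^t e^{-\nu s}\,ds\Bigr)\ge\tau_0 e^{-CM_0/\nu}.
\end{align*}
(The $e^{-\nu s/2}$ in the statement can be accommodated by using $\gamma=\nu/2$ and a Cauchy--Schwarz splitting, if desired.)

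\textbf{Case $d=3$, small data.} Under the hypothesis $\Vert\nabla u_0\Vert_{L^\infty}\le\nu/\kappa$, the standard $H^r$ energy estimate ($r>5/2$) together with the Sobolev embedding $\Vert\nabla u\Vert_{L^\infty}\lesssim\Vert\omega\Vert_{H^{r-1}}$ yields, via a bootstrap in the maximal existence interval, the exponential decay $\Vert\omega(t)\Vert_{H^r}\le\Vert\omega_0\Vert_{H^r}e^{-\nu t/2}$ for all $t\ge 0$ provided $\kappa$ is large enough; in particular $\Vert\nabla u(t)\Vert_{L^\infty}$ remains well below $\nu$, precluding finite-time blow-up. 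For the analytic norm I would then invoke the unweighted analog of Lemma~\ref{lemma:new}, bounding convection and vortex stretching by $C\Vert\omega\Vert_{L^2}\Vert e^{\tau\LLs}\omega\Vert_{L^2}^2+C\tau\Vert e^{\tau\LLs}\omega\Vert_{L^2}\Vert\Lambda^{1/(2s)}e^{\tau\LLs}\omega\Vert_{L^2}^2$. The $C\Vert\omega\Vert_{L^2}$ prefactor is, by the smallness hypothesis and the decay just mentioned, dominated by $\nu/2$, so absorbing half the damping yields
\begin{align*}
\tfrac{d}{dt}\Vert e^{\tau\LLs}\omega\Vert_{L^2}^2+\nu\Vert e^{\tau\LLs}\omega\Vert_{L^2}^2\le 2(\dot\tau+C\tau\Vert e^{\tau\LLs}\omega\Vert_{L^2})\Vert\Lambda^{1/(2s)}e^{\tau\LLs}\omega\Vert_{L^2}^2.
\end{align*}
Choosing $\tau$ so that the right-hand side vanishes reproduces the 2D argument and gives the same lower bound with a modified constant $\bar C$ depending additionally on $\Vert\omega_0\Vert_{H^r}$ and $\kappa$.

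\textbf{Main obstacle.} The only genuinely delicate point is the 3D bootstrap ensuring that $\Vert\nabla u(t)\Vert_{L^\infty}$ stays below the threshold $\nu/\kappa$ for all $t\ge 0$; the smallness of the initial $L^\infty$-norm of $\nabla u$ must be propagated through higher Sobolev norms to control the stretching term in the $H^r$ estimate. Everything else is routine: the analytic-norm energy identity, the nonlinear bounds (whose proofs are the $\alpha$-free shadows of the lemmas already invoked in Sections 3 and 4, or can be read off from \cite{LO}), and the Galerkin approximation justifying the \emph{a priori} estimates, all follow the template already established in the paper. The finiteness of $\int_0^\infty e^{-\nu s/2}\,ds=2/\nu$ then immediately yields the uniform lower bound $\tau(t)\ge\tau(0)e^{-2\bar C/\nu}$.
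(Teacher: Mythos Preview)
Your proposal has a genuine gap: the ``$\alpha$-free shadows'' of Lemmas~\ref{lemma:2d:convection} and~\ref{lemma:new} do \emph{not} hold at the $r=0$ level for the damped Euler equations. In the second-grade setting those lemmas work because $u=\mathcal{K}_\alpha\omega$ gains three derivatives, so that $\Vert\Lambda^3 e^{\tau\Lambda^{1/s}}u\Vert_{L^2}\le C\alpha^{-1}\Vert e^{\tau\Lambda^{1/s}}\omega\Vert_{L^2}$ (cf.~\eqref{eq:u:H3estimate}); this is exactly what closes the sums $\sum_{j}|j|\,|\hat u_j|\,e^{\tau|j|^{1/s}}$ appearing in the proof. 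For damped Euler the Biot--Savart law gives only $|\hat u_j|\sim |j|^{-1}|\hat\omega_j|$, so the same sum becomes $\sum_j|\hat\omega_j|e^{\tau|j|^{1/s}}$, a Wiener-type norm that is \emph{not} controlled by $\Vert e^{\tau\LLs}\omega\Vert_{L^2}$ or $\Vert\Lambda^{1/(2s)}e^{\tau\LLs}\omega\Vert_{L^2}$ in either $d=2$ or $d=3$. Hence the claimed bound $|\langle u\cdot\nabla\omega,e^{2\tau\LLs}\omega\rangle|\le C\tau\Vert e^{\tau\LLs}\omega\Vert_{L^2}\Vert\Lambda^{1/(2s)}e^{\tau\LLs}\omega\Vert_{L^2}^2$ is false, and the scheme at $r=0$ does not close.

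The paper's proof avoids this by working at the level $r>5/2$ and invoking the Levermore--Oliver estimate \eqref{eq:LO}, which carries an additional forcing term $C\Vert\omega\Vert_{H^r}^3$ on the right. This in turn forces you to first establish exponential decay of $\Vert\omega(t)\Vert_{H^r}$. In three dimensions the smallness hypothesis delivers this directly, as you note. In two dimensions, however, the decay of $\Vert\omega\Vert_{H^r}$ is itself nontrivial: the $L^p$ decay $\Vert\omega(t)\Vert_{L^p}\le\Vert\omega_0\Vert_{L^p}e^{-\nu t}$ must be upgraded via the Br\'ezis--Gallouet logarithmic potential estimate and a Gr\"onwall argument (cf.~\eqref{eq:2d:velocity:gradient}--\eqref{eq:2d:sobolev}) to obtain $\Vert\omega(t)\Vert_{H^r}\le C_1 e^{-\nu t}$. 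You identified the 3D bootstrap as the ``only genuinely delicate point,'' but the 2D Sobolev decay requires an argument of comparable care that your proposal omits entirely.
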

\begin{proof}[Proof of Theorem~\ref{thm4}]
Let us first treat the case when $d=2$, with $\nu>0$ fixed. Since $\ddiv u =0 $, it classically follows from \eqref{eq:E1} that for all $1\leq p \leq \infty$ we have
\begin{align}\llabel{eq:2d:vorticity:Lp}
\Vert \omega(t) \Vert_{L^p} \leq \Vert \omega_0 \Vert_{L^p} e^{-\nu t},
\end{align}
$t\geq 0$, and for any $r>0$ the Sobolev energy inequality holds
\begin{align}\llabel{eq:2d:vorticity:sobolev}
\frac 12 \frac{d}{dt} \Vert \omega(t) \Vert_{H^r}^{2} + \nu \Vert \omega(t) \Vert_{H^r}^{2} \leq C\Vert \nabla u(t) \Vert_{L^\infty} \Vert \omega(t) \Vert_{H^r}^{2},
\end{align} where $C$ is a positive dimensional constant depending on $r$. Moreover, if $r>1$ the classical potential estimate(cf.~\cite{BrGa,MB})
\begin{align*}
\Vert \nabla u \Vert_{L^\infty} \leq C \Vert \omega \Vert_{L^2} + C \Vert \omega \Vert_{L^\infty} + C \Vert \omega \Vert_{L^\infty}\log \left(1 + \frac{\Vert \omega \Vert_{H^r}}{\Vert \omega \Vert_{L^\infty}}\right)
\end{align*}combined with \eqref{eq:2d:vorticity:Lp} shows that
\begin{align}\llabel{eq:2d:velocity:gradient}
\Vert \nabla u(t) \Vert_{L^\infty} &\leq  C e^{-\nu t} \left( \Vert \omega_0 \Vert_{L^2} + \Vert \omega_0 \Vert_{L^\infty}  + \Vert \omega_0 \Vert_{L^\infty}   \log \left(1 + \frac{e^{\nu t} \Vert \omega(t) \Vert_{H^r} }{\Vert \omega_0 \Vert_{L^\infty}}\right) \right) \notag\\
& \leq C C_0 e^{-\nu t} \left( 2 + \log \left(1 + e^{\nu t} \Vert \omega(t) \Vert_{H^r} /C_0\right) \right),
\end{align}where $C_0 = \max\{\Vert \omega_0 \Vert_{L^2},\Vert \omega_0 \Vert_{L^\infty}\} >0 $. Multiplying \eqref{eq:2d:vorticity:sobolev} by $e^{\nu t}$ and combining with the above estimate \eqref{eq:2d:velocity:gradient}, upon letting $y(t) = e^{\nu t} \Vert \omega(t) \Vert_{H^r}  / C_0$, we obtain
\begin{align*}
  \dot{y}(t)\leq C e^{-\nu t} y(t) \left( 2 + \log (1 + y(t))\right).
\end{align*}By Gr\"onwall's inequality, the above implies that there exists a positive constant $C_1 = C(C_0,\nu,\Vert \omega_0 \Vert_{H^r})$ such that $y(t) \leq C_1/C_0$ for all $t\geq 0$, and therefore by the definition of $y(t)$ we have
\begin{align}
  \Vert \omega(t) \Vert_{H^r} \leq C_1 e^{- \nu t}, \llabel{eq:2d:sobolev}
\end{align} for all $t\geq 0$. Similarly, by \eqref{eq:2d:velocity:gradient}, there exists $C_2 = C(C_0,C_1) >0$ such that for all $t\geq 0$ we have
\begin{align}
  \Vert \nabla u(t) \Vert_{L^\infty} \leq C_2 e^{- \nu t} \llabel{eq:2d:velocity}.
\end{align}
We now turn to the corresponding Gevrey-class estimates. For $r>5/2$, and initial vorticity satisfying $\Vert \LL^{r+1/2s} e^{\tau_0 \LL^{1/s}} \omega_0 \Vert_{L^2} < \infty$, the following estimate can be deduced from \cite{LO}
\begin{align}
  \frac 12 \frac{d}{dt} \Vert \LL^r e^{\tau \LL^{1/s}}\omega \Vert_{L^2}^2 + \nu \Vert \LL^r e^{\tau \LL^{1/s}}\omega \Vert_{L^2}^2 &\leq C \Vert \omega\Vert_{H^r}^3  + \left(\dot{\tau}+ C \tau \Vert \LL^r e^{\tau \LL^{1/s}}\omega \Vert_{L^2} \right) \Vert \LL^{r+1/2s} e^{\tau \LL^{1/s}}\omega \Vert_{L^2}^2. \llabel{eq:LO}
\end{align}Therefore, if $\tau(t)$ decays fast enough so that $\dot{\tau}(t) + C \tau(t) \Vert \LL^r e^{\tau(t) \LL^{1/s}} \omega(t) \Vert_{L^2} \leq 0$ for all $t\geq 0$, then using \eqref{eq:2d:sobolev} we have
\begin{align}
   \frac 12 \frac{d}{dt} \Vert \LL^r e^{\tau(t) \LL^{1/s}} \omega(t) \Vert_{L^2}^2 + \nu \Vert \LL^r e^{\tau(t) \LL^{1/s}} \omega(t) \Vert_{L^2}^2\leq C C_{1}^{3} e^{-3\nu t},
\end{align}and hence there exists a positive constant $C_3 = C( C_1, \nu, \Vert \LL^r e^{\tau_0 \LL^{1/s}}\omega_0 \Vert_{L^2})$ such that for all $t\geq 0$
\begin{align}
  \Vert \LL^r e^{\tau(t) \LL^{1/s}}\omega(t) \Vert_{L^2} \leq C_3 e^{-\nu t /2}.
\end{align} Then it is sufficient to impose
\begin{align}
  \dot{\tau}(t) + C C_3 \tau(t) e^{-\nu t /2} = 0,
\end{align}and hence we obtain the lower bound for the radius of Gevrey-class regularity
\begin{align}
  \tau(t) \geq \tau_0 e^{-C C_3 \int_{0}^{t} e^{-\nu s /2} ds}.
\end{align}
In particular it follows that for all $t\geq 0$,
\begin{align}
  \tau(t) \geq \tau_0 e^{-2 C C_3/\nu},
\end{align}which proves the first part of the theorem. The case $d=3$ is treated similarly: the estimate \eqref{eq:LO} holds also if $d=3$, so the missing ingredient is the exponential decay of the Sobolev norms. But as noted earlier, the smallness condition on $\Vert\nabla u\Vert_{L^\infty}$, not only gives the global in time existence of $H^r$ solutions, but also their exponential decay. We omit further details. \end{proof}

\section{Appendix}\label{sec:appendix} \setcounter{equation}{0}
\subsection{Proof of Lemma~\ref{lemma:2d:new:estimate}}\label{app:2d:new:estimate}
\begin{proof}[Proof of \eqref{eq:2d:T1}]Recall that we need to bound the quantity
\begin{align}
  T_1 &= \aalpha \left| \langle e^{\tau \LL} \left( (u\cdot \nabla) \Delta \rot u\right), e^{\tau \LL} \Delta \rot u \rangle\right|\notag\\
  &= \aalpha\left| \langle e^{\tau \LL} \left( (u\cdot \nabla) \Delta \rot u\right), e^{\tau \LL} \Delta \rot u \rangle - \langle (u\cdot \nabla) e^{\tau \LL}  \Delta \rot u, e^{\tau \LL} \Delta \rot u \rangle \right|,
\end{align}
since $\ddiv u = 0$. By Plancherel's theorem we have
\begin{align}
T_1 &\leq C \aalpha \sum\limits_{j+k=l;\, j,k,l\neq 0} \left(  e^{\tau |l|} - e^{\tau |k|}\right) |\Fu_j\cdot j| |k|^2 |k\times \Fu_k| |l|^2 |l \times \Fu_l| e^{\tau |l|}.
\end{align}
Since $|e^{\tau |l|} - e^{\tau|k|}| \leq C \tau |j| e^{\max\{|k|,|l|\}}$, we obtain
\begin{align}
T_1 &\leq C \aalpha \tau \sum\limits_{j+k=l;\, j,k,l\neq 0} |j|^2 |\Fu_j|e^{\tau |j|}  |k|^2 |k\times \Fu_k| e^{\tau |k|} |l|^2 |l \times \Fu_l| e^{\tau |l|}\notag\\
& \leq C \aalpha \tau \sum\limits_{j+k=l;\, j,k,l\neq 0;\, |l|\geq |k|} |j|^{3/2} |\Fu_j|e^{\tau |j|}  |k|^2 |k\times \Fu_k| e^{\tau |k|} |l|^{5/2} |l \times \Fu_l| e^{\tau |l|} \notag\\
& \leq C \aalpha \tau \Vert \LL^{1/2} e^{\tau \LL} \rot \Delta u \Vert_{L^2} \Vert e^{\tau \LL} \rot \Delta u \Vert_{L^2} \sum\limits_{j\neq 0} |j|^{3/2} |\Fu_j| e^{\tau |j|} \notag\\
& \leq C \aalpha \tau \Vert \LL^{1/2} e^{\tau \LL} \rot \Delta u \Vert_{L^2} \Vert e^{\tau \LL} \rot \Delta u \Vert_{L^2}^2.\llabel{eq:T1:1}
\end{align}In the above we have used the triangle inequality $|j|^{1/2} \leq |k|^{1/2} + |l|^{1/2}$, the fact that in the two-dimensional case we have $\sum_{j\in{\mathbb Z}^{2} \setminus \{0\}} |j|^{-3} < \infty$, and the Cauchy-Schwartz inequality.
The proof of \eqref{eq:2d:T1} is concluded by estimating the right side of \eqref{eq:T1:1} as
\begin{align}
  \frac{\nu}{4} \Vert e^{\tau \LL} \rot \Delta u \Vert_{L^2}^2 + \frac{C \alpha^4 \tau^2}{\nu} \Vert \LL^{1/2} e^{\tau \LL} \rot \Delta u \Vert_{L^2}^2 \Vert e^{\tau \LL} \rot \Delta u \Vert_{L^2}^2
\end{align}
\end{proof}

\begin{proof}[Proof of \eqref{eq:2d:T2}]
Recall that we need to bound the quantity $T_2$, which can be written as
\begin{align}
  T_2 &= \left| \langle \LL e^{\tau \LL} \left( (u\cdot \nabla) \rot u\right), \LL e^{\tau \LL} \rot u  \rangle - \langle (u\cdot \nabla) \LL e^{\tau \LL} \rot u, \LL e^{\tau \LL} \rot u  \rangle \right|,
\end{align}using the fact that $\ddiv u = 0$. By Plancherel's theorem we have
\begin{align}
T_2 &\leq C \sum\limits_{j+k=l;\, j,k,l\neq 0} \left(|l|  e^{\tau |l|} - |k| e^{\tau |k|}\right) |\Fu_j\cdot j|  |k\times \Fu_k| |l| |l \times \Fu_l| e^{\tau |l|}.
\end{align}By the mean value theorem, we have
\begin{align*}\left| |l|e^{\tau |l|} - |k|e^{\tau |k|} \right| \leq |j| (1 + \tau \max\{|l|,|k|\}) e^{\tau \max\{|l|,|k|\}},
\end{align*}and therefore by the triangle inequality we obtain
\begin{align}
T_2 &\leq C \sum\limits_{j+k=l;\, j,k,l\neq 0}  |\Fu_j| |j|^2 e^{\tau |j|}  |k\times \Fu_k| e^{\tau |k|} |l| |l \times \Fu_l| e^{\tau |l|} \notag\\
& + C \tau \sum\limits_{j+k=l;\, j,k,l\neq 0}  |\Fu_j| |j|^2 e^{\tau |j|}  (|j|+|k|)|k\times \Fu_k| e^{\tau |k|} |l| |l \times \Fu_l| e^{\tau |l|}.
\end{align}By symmetry, and the inequality $e^x \leq 1 + x e^x$ for all $x\geq 0$, we get
\begin{align}
T_2 &\leq C \sum\limits_{j+k=l;\, j,k,l\neq 0;\, |j|\leq |l|}  |\Fu_j| |j| e^{\tau |j|}  |k\times \Fu_k| |l|^2 |l \times \Fu_l| e^{\tau |l|} \notag\\
& + C \tau \sum\limits_{j+k=l;\, j,k,l\neq 0;\, |j|\leq |k|,|l|}  |\Fu_j| |j|^{1/2} e^{\tau |j|}  |k|^{3/2} |k\times \Fu_k| e^{\tau |k|} |l|^{2} |l \times \Fu_l| e^{\tau |l|} \notag,
\end{align}and by the Cauchy-Schwartz inequality,
\begin{align}
T_2 &\leq C \Vert \rot u \Vert_{L^2} \Vert e^{\tau \LL} \rot \Delta  u \Vert_{L^2} \sum\limits_{j\neq 0} |\Fu_j| |j| e^{\tau |j|} \notag\\
    & \qquad + C \tau \Vert \LL^{3/2} e^{\tau \LL} \rot u \Vert_{L^2} \Vert e^{\tau \LL} \rot \Delta  u \Vert_{L^2} \sum\limits_{j\neq 0}  |\Fu_j| |j|^{1/2} e^{\tau |j|}.
\end{align}Note that in the two-dimensional case, by the Cauchy-Schwartz inequality we have
\begin{align}
  \sum\limits_{j\neq 0} |\Fu_j| |j| e^{\tau |j|} &= \sum\limits_{j\neq 0} \left( |j| |\Fu_j|^{1/2} e^{\tau |j|/2} \right) \left(|j|^{3/2}|\Fu_j|^{1/2} e^{\tau |j|/2} \right) |j|^{-3/2} \notag\\
  & \leq C \Vert \LL e^{\tau \LL} \curl u \Vert_{L^2}^{1/2} \Vert e^{\tau \LL} \curl \Delta u \Vert_{L^2}^{1/2}.
\end{align}Similarly,
\begin{align}
  \sum\limits_{j\neq 0} |j|^{1/2}  |\Fu_j| e^{\tau |j|} \leq \sum\limits_{j\neq 0} |j|^{2} |\Fu_j| e^{\tau |j|} |j|^{-3/2} \leq C \Vert \LL e^{\tau \LL} \curl u \Vert_{L^2},
\end{align}and therefore
\begin{align}
T_2 &\leq C \Vert \rot u \Vert_{L^2} \Vert \LL e^{\tau \LL} \curl u \Vert_{L^2}^{1/2} \Vert e^{\tau \LL} \rot \Delta  u \Vert_{L^2}^{3/2} \notag\\
& + C \tau \Vert \LL^{3/2} e^{\tau \LL} \rot u \Vert_{L^2} \Vert \LL e^{\tau \LL} \curl u \Vert_{L^2} \Vert e^{\tau \LL} \rot \Delta  u \Vert_{L^2}.
\end{align}%
The above estimate and Young's inequality concludes the proof of \eqref{eq:2d:T2}.\end{proof}

\subsection{Proof of Lemma~\ref{lemma:new}}\label{app:new}
 For convenience of notation we let $\sss = 1/s$, so that $\sss \in (0,1]$. Since $\ddiv u =0$, cf.~\cite{KV,LO} we have $(u \cdot \nabla e^{\tau \Lambda^\sss}  \omega, e^{\tau \Lambda^\sss} \omega) = 0$, and therefore
\begin{align*}
  T_1 =\left| (u \cdot \nabla \omega, e^{2\tau \Lambda^\sss} \omega)\right| =  \left| (u \cdot \nabla \omega, e^{2\tau \Lambda^\sss} \omega) - (u \cdot \nabla e^{\tau \Lambda^\sss}  \omega, e^{\tau \Lambda^\sss} \omega) \right|.
\end{align*}As in \cite{FT,KV,LO}, using Plancherel's theorem we write the above term as
\begin{align}\llabel{eq:T1}
T_1 = \left| (2\pi)^3 i \sum\limits_{j+k=l} (\hat{u}_j \cdot k) (\Fw_k \cdot \bar{{\Fw}}_l) e^{\tau |l|^\sss} \left(e^{\tau |l|^\sss} - e^{\tau|k|^\sss}\right) \right|,
\end{align}where the sum is taken over all $j,k,l\in {\mathbb Z}^3 \setminus \{0\}$. Using the inequality $e^x -1 \leq x e^x$ for $x\geq 0$, the mean-value theorem, and the triangle inequality $|k+j|^\sss \leq |k|^\sss + |j|^\sss$, we estimate
\begin{align*}
  \left|e^{\tau |l|^\sss} - e^{\tau|k|^\sss}\right| \leq  \tau \bigl| |l|^\sss - |k|^\sss\bigr|  e^{\tau \max\{|l|^\sss,|k|^\sss\} } \leq C  \tau \frac{|j|}{|k|^{1-\sss} + |l|^{1-\sss}} e^{\tau |j|^\sss} e^{\tau |k|^\sss},
\end{align*}for all $\sss\in(0,1]$, where $C> 0$ is a dimensional constant. By \eqref{eq:T1}, the triangle inequality, and the Cauchy-Schwartz inequality we obtain
\begin{align}
  T_1 & \leq C \tau \sum\limits_{j+k=l} |j| |\hat{u}_j| e^{\tau |j|^\sss} |\Fw_k|e^{\tau |k|^\sss} |{\Fw}_l| e^{\tau |l|^\sss} \frac{|k|}{|k|^{1-\sss} + |l|^{1-\sss}} \notag\\
  & \leq C \tau \sum\limits_{j+k=l} |j| |\hat{u}_j| e^{\tau |j|^\sss} |\Fw_k|e^{\tau |k|^\sss} |{\Fw}_l| e^{\tau |l|^\sss} |k|^{\sss/2} \left(|j|^{\sss/2} + |l|^{\sss/2}\right)\notag\\
  & \leq C \tau \Vert e^{\tau \Lambda^\sss} \omega \Vert_{L^2} \Vert \Lambda^{\sss/2} e^{\tau \Lambda^\sss}\omega \Vert_{L^2} \sum\limits_{j\neq 0} |j|^{1+\sss/2} |\hat{u}_j| e^{\tau |j|^\sss} + C \tau \Vert \Lambda^{\sss/2} e^{\tau \Lambda^\sss}\omega \Vert_{L^2}^2 \sum\limits_{j\neq 0} |j||\hat{u}_j| e^{\tau |j|^\sss}\notag\\
  & \leq C \tau \Vert e^{\tau \Lambda^\sss} \omega \Vert_{L^2} \Vert \Lambda^{\sss/2} e^{\tau \Lambda^\sss} \omega \Vert_{L^2} \Vert \Lambda^{3+\sss/2} e^{\tau \Lambda^\sss} u \Vert_{L^2}  + C \tau \Vert \Lambda^{\sss/2} e^{\tau \Lambda^\sss}\omega \Vert_{L^2}^2 \Vert \Lambda^3 e^{\tau \Lambda^\sss} u \Vert_{L^2} \llabel{eq:T1:v2}
\end{align}In the above we used the fact that $ \sum_{j\neq 0,\, j\in {\mathbb Z}^3} |j|^{-4} <\infty$. We recall that by \eqref{eq:u:def2} we have $u = {\mathcal K}_\alpha \omega$, and therefore for $\alpha >0$ we have
\begin{align*}
  \Vert \Lambda^3 u \Vert_{L^2} \leq \frac{C}{\alpha} \Vert \omega \Vert_{L^2},
\end{align*} and similarly
\begin{align}
  \Vert \Lambda^3 e^{\tau \Lambda^\sss} u \Vert_{L^2} \leq \frac{C}{\alpha} \Vert e^{\tau \Lambda^\sss} \omega \Vert_{L^2},\ \mbox{and}\ \Vert \Lambda^{3+\sss/2}e^{\tau \Lambda^\sss} u \Vert_{L^2} \leq \frac{C}{\alpha} \Vert \Lambda^{\sss/2} e^{\tau \Lambda^\sss} \omega \Vert_{L^2}. \llabel{eq:u:H3estimate}
\end{align}By combining \eqref{eq:T1:v2} and \eqref{eq:u:H3estimate} above, we obtain for all $\tau\geq 0$, and $\sss\in(0,1]$ that
\begin{align}
T_1 \leq \frac{C\tau}{\alpha}  \Vert e^{\tau \Lambda^\sss} \omega \Vert_{L^2} \Vert \Lambda^{\sss/2} e^{\tau \Lambda^\sss} \omega \Vert_{L^2}^2,  \llabel{eq:T1:final}
\end{align} for some sufficiently large dimensional constant $C$, thereby proving \eqref{eq:convection:term}, since $\sss=1/s$.

The estimate for the vorticity stretching term is similar. By the triangle inequality and the the estimate $e^x \leq 1 + x e^x$ for all $x\geq 0$, we have
\begin{align}
  T_2 &= \left|( \omega \cdot u, e^{2\tau \Lambda^\sss} \omega) \right| = \left| (2\pi)^3 i \sum\limits_{j+k=l} (\Fw_j \cdot k) (\hat{u}_k \cdot \bar{{\Fw}}_l) e^{2\tau |l|^\sss}  \right|\notag\\
  & \leq C \sum\limits_{j+k=l} |\Fw_j| e^{\tau |j|^\sss}  |k| |\hat{u}_k| e^{\tau |k|^\sss} |{\Fw}_l| e^{\tau |l|^\sss} \notag\\
  & \leq C \sum\limits_{j+k=l} |\Fw_j| e^{\tau |j|^\sss}  |k| |\hat{u}_k| |{\Fw}_l| e^{\tau |l|^\sss} +  C \tau \sum\limits_{j+k=l} |\Fw_j| e^{\tau |j|^\sss}  |k|^{1+\sss} |\hat{u}_k| e^{\tau |k|^\sss} |{\Fw}_l| e^{\tau |l|^\sss} \notag\\
  & \leq \frac{C}{\alpha} \Vert \omega \Vert_{L^2} \Vert e^{\tau \Lambda^\sss} \omega \Vert_{L^2}^2 + \frac{C\tau}{\alpha}  \Vert e^{\tau \Lambda^\sss}\omega \Vert_{L^2} \Vert \Lambda^{\sss/2} e^{\tau\Lambda^\sss} \omega \Vert_{L^2}^2 \llabel{eq:T2est}.
\end{align} In the last inequality above we also used $\Vert \Lambda^3 u\Vert_{L^2} \leq C \Vert \omega \Vert_{L^2}/\alpha$. This proves \eqref{eq:stretching:term} and hence concludes the proof of the lemma.

\subsection{Proof of Lemma~\ref{lemma:KV}}\label{app:KV}
For ease of notation we let $\sss = 1/s$, so that $\sss\in(0,1]$. Following notations in Section~\ref{sec:3d}, for any $m\in \{1,2,3\}$, we need to estimate
\begin{align}
  T_1 = (u\cdot \nabla \omega, \Lambda_{m}^{2} e^{2\tau \LL_m^\sss} \omega),
\end{align}
and
\begin{align}
  T_2 = (\omega\cdot \nabla u, \Lambda_{m}^{2} e^{2\tau \LL_m^\sss} \omega).
\end{align}
First we bound the term $T_1$. Note that since $\ddiv u= 0$, we have $(u \cdot \nabla \Lambda_m e^{\tau \LL_m^\sss} \omega,\Lambda_m e^{\tau \LL_m^\sss} \omega)=0$, and therefore by Plancherel's theorem we have (see also \cite{KV})
\begin{align}
  T_1 &= (u\cdot \nabla \omega, \Lambda_{m}^{2} e^{2\tau \LL_m^\sss} \omega) - (u \cdot \nabla \Lambda_m e^{\tau \LL_m^\sss} \omega,\Lambda_m e^{\tau \LL_m^\sss} \omega) \notag\\
  & = i (2\pi)^3 \sum\limits_{j+k=l} \left( |l_m| e^{\tau |l_m|^\sss} - |k_m| e^{\tau |l_m|^\sss}\right) (\hat{u}_j \cdot k) (\Fw_k \cdot \bar{\Fw}_l) |l_m| e^{\tau |l_m|^\sss}, \label{eq:T1*}
\end{align}
where the summation is taken over all $j,k,l\in {\mathbb Z}^3 \setminus \{0\}$. We split the Fourier symbol arising from the commutator, namely $|l_m| e^{\tau |l_m|^\sss} - |k_m| e^{\tau |l_m|^\sss}$, in four parts (cf.~\cite{KV}) by letting
\begin{align*}
  T_{11} &= i (2\pi)^3 \sum\limits_{j+k=l} \left( |l_m| - |k_m| \right) e^{\tau |k_m|^\sss} (\hat{u}_j \cdot k) (\Fw_k \cdot \bar{\Fw}_l) |l_m| e^{\tau |l_m|^\sss},\\
  T_{12} &= i (2\pi)^3 \sum\limits_{j+k=l} |l_m|e^{\tau |k_m|^\sss} \left( e^{\tau (|l_m|^\sss-|k_m|^\sss)} - 1 - \tau (|l_m|^\sss - |k_m|^\sss) \right) (\hat{u}_j \cdot k) (\Fw_k \cdot \bar{\Fw}_l) |l_m| e^{\tau |l_m|^\sss},\\
  T_{13} &= i (2\pi)^3 \sum\limits_{j+k=l} \tau |k_m|^{1-\sss/2} e^{\tau|k_m|^\sss} \left( |l_m|^\sss - |k_m|^\sss \right) (\hat{u}_j \cdot k) (\Fw_k \cdot \bar{\Fw}_l) |l_m|^{1+\sss/2} e^{\tau |l_m|^\sss},\\
  T_{14} &= i (2\pi)^3 \sum\limits_{j+k=l} \tau (|l_m| - |k_m|) e^{\tau |k_m|^\sss} \left( |l_m|^{1-\sss/2} - |k_m|^{1-\sss/2}\right) (\hat{u}_j \cdot k) (\Fw_k \cdot \bar{\Fw}_l) |l_m|^{1+\sss/2} e^{\tau |l_m|^\sss}.
\end{align*}To isolate the term $ \Vert \nabla u \Vert_{L^\infty}$ arising from $T_{11}$ and $T_{13}$, we need to use the inverse Fourier transform and hence may not directly bound these two terms in absolute value. The key idea is to use the one-dimensional identity (cf.~\cite{KV})
\begin{align}
  |j_m + k_m | - |k_m| = j_m \sgn(k_m) + 2 (j_m + k_m) \sgn(j_m) \chi_{\{\sgn(k_m + j_m) \sgn(k_m) = -1\}},\llabel{eq:decomposition}
\end{align}an notice that on the region $\{ \sgn(k_m + j_m) \sgn(k_m) = -1\}$, we have $0\leq |k_m| \leq |j_m|$. Define the operator $H_m$ as the fourier multiplier with symbol $\sgn(k_m)$, which is hence bounded on $L^2$. From \eqref{eq:T1*}, the defintion of $T_{11}$, and \eqref{eq:decomposition}, it follows that
\begin{align}
  T_{11} & = (\partial_m u \cdot \nabla H_m e^{\tau \LL_m^\sss} \omega, \Lambda_m e^{\tau \LL_m^\sss} \omega) \notag\\
  & + i (2\pi)^3 \sum\limits_{j+k=l; \{\sgn(k_m + j_m) \sgn(k_m) = -1\}} 2 (j_m+k_m) \sgn(j_m) e^{\tau |k_m|^\sss} (\hat{u}_j \cdot k) (\Fw_k \cdot \bar{\Fw}_l) |l_m| e^{\tau |l_m|^\sss}.
\end{align}The first term in the above is bounded by the H\"older inequality by $\Vert \nabla u \Vert_{L^\infty} \Vert \omega \Vert_{X_{s,\tau}}^2$. The second term is bounded in absolute value, by making use of $e^{\tau |k_m|^\sss} \leq e + \tau^2 |k_m|^{2\sss} e^{\tau |k_m|^\sss}$, and of $|k_m|\leq |j_m|$, by the quantity
\begin{align}
  C \Vert \omega \Vert_{H^1} \Vert \omega \Vert_{X_{s,\tau}}\left( \sum\limits_{j\neq 0} |j_m| |\hat{u}_j|\right) + C \tau^2 \Vert \omega \Vert_{Y_{s,\tau}}^2 \left(\sum\limits_{j\neq 0} |j_m|^{1+\sss} |\hat{u}_j|\right).
\end{align}By the Cauchy-Schwartz inequality, and the fact that $2(\sss-3) < -3$ for all $\sss\in(0,1]$, we have
\begin{align}
\sum_{j\neq0} |j_m|^{1+\sss} |\hat{u}_j| = \sum_{j\neq 0} |j_m|^{1+\sss} |j|^{3-\sss} |\hat{u}_j| |j|^{-3+\sss} \leq C \Vert \Lambda_{m}^{1+\sss} \LL^{3-\sss} u \Vert_{L^2} \leq C \Vert \omega \Vert_{H^1}/\alpha,
\end{align} and similarly $\sum_{j\neq 0}|j_m| |\hat{u}_j| \leq C \Vert \omega \Vert_{H^1}/\alpha$. Therefore
\begin{align}
  | T_{11}| \leq C \Vert \nabla u \Vert_{L^\infty} \Vert \omega \Vert_{X_{s,\tau}}^2 + \frac{C}{\alpha} \Vert \omega \Vert_{H^1}^2 \Vert \omega \Vert_{X_{s,\tau}} + \frac{C}{\alpha} \tau^2 \Vert \omega \Vert_{H^1} \Vert \omega \Vert_{Y_{s,\tau}}^2.\llabel{eq:T11}
\end{align}To bound $T_{13}$ one proceeds exactly the same if $s=\sss=1$. If $\sss\in(0,1)$, \eqref{eq:decomposition} may not be applied directly to $|l_m|^\sss - |k_m|^\sss$. In this case, by the mean value theorem, for any $|l_m|,|k_m|\geq 0$, there exists $\theta_{m,k,l} \in (0,1)$ such that
\begin{align}
  |l_m|^\sss - |k_m|^\sss &= \sss (|l_m| -|k_m|) |k_m|^{\sss-1}\notag\\
   & \qquad + \sss (|l_m|-|k_m|) \Big( (\theta_{m,k,l} |k_m| + (1-\theta_{m,k,l}) |l_m|)^{\sss-1} - |k_m|^{\sss-1}\Big).
\end{align}It is possible to apply \eqref{eq:decomposition} to the first term in the above identity, while the second term is bounded in absolute value by $\sss(1-\sss) |j_m|^2 |k_m|^{\sss-1} / \min\{ |k_m|,|l_m|\}$. The rest of the $T_{13}$ estimate is the same as the one for $T_{11}$ and one similarly obtains
\begin{align}
  | T_{13}| \leq C \Vert \nabla u \Vert_{L^\infty} \Vert \omega \Vert_{X_{s,\tau}}^2 + \frac{C}{\alpha} \Vert \omega \Vert_{H^1}^2 \Vert \omega \Vert_{X_{s,\tau}} + \frac{C}{\alpha} \tau^2 \Vert \omega \Vert_{H^1} \Vert \omega \Vert_{Y_{s,\tau}}^2. \llabel{eq:T13}
\end{align}The term $T_{12}$ is estimated in absolute value, by making use of the inequality $|e^x - 1 - x| \leq x^2 e^{|x|}$, and of $| |l_m|^\sss - |k_m|^\sss| \leq C |j_m|/ (|k_m|^{1-\sss} + |l_m|^{1-\sss})$. It follows from the Cauchy-Schwartz inequality applied in the Fourier variables that
\begin{align}
  |T_{12}| \leq \frac{C}{\alpha} \tau^2 \Vert \omega \Vert_{X_{s,\tau}} \Vert \omega \Vert_{Y_{s,\tau}}^2.\llabel{eq:T12}
\end{align}Similarly, by using that $e^x - 1 \leq x e^x$ for all $x\geq0$, it follows that
\begin{align}
  |T_{14}| \leq \frac{C}{\alpha} \tau \Vert \omega \Vert_{H^1}^2 \Vert \omega \Vert_{X_{s,\tau}} + \frac{C}{\alpha} \tau^2 \Vert \omega \Vert_{H^1} \Vert \omega \Vert_{Y_{s,\tau}}^2.\llabel{eq:T14}
\end{align}Combining the estimates \eqref{eq:T11}, \eqref{eq:T12}, \eqref{eq:T13}, and \eqref{eq:T14}, and using that $\tau(t) \leq \tau(0) \leq C$, we obtain the desired estimate on $T_1$. To estimate $T_2$, we proceed similarly. Here we do not have a commutator, and all terms are estimated in absolute value in Fourier space. We omit details and refer the interested reader to \cite[Proof of Lemma 2.5]{KV}.

\subsection{Proof of Estimate \eqref{eq:sobolev:growth:bound}} \llabel{app:sobolev:growth:bound}
If we take the inner product of \eqref{eq:w1} with $\omega$, and then with $\Delta \omega$, using the fact that $\int u\nabla\omega\Delta\omega=-\int\partial_k u_i\, \partial_i \omega_j\, \partial_k\omega_j$ by integrating by parts,  we obtain
\begin{align}
\frac{d}{2dt} \Vert \omega \Vert_{H^1}^2 + \frac{\nu}{1+\aalpha} \Vert \omega \Vert_{H^1}^2 \leq C \Vert \nabla u \Vert_{L^\infty} \Vert \omega \Vert_{H^1}^2 + |\langle \partial_k(\omega \cdot \nabla u) ,\partial_k \omega \rangle|.
\end{align}
The proof of \eqref{eq:sobolev:growth:bound} follows from the above estimate by using H\"older's inequality and Gr\"onwall's inequality and assuming that we have
\begin{align}
\Vert \omega \cdot \nabla u  \Vert_{H^1}  \leq C\|\nabla u\|_{L^\infty} \|\omega\|_{H^1}.
\end{align}The latter can be proved by using the Bony's para-differential calculus \cite{Chemin}. This inequality is equivalent to proving that
$$\|\Delta_q(\omega\cdot \nabla u)\|_{L^2}\leq C2^{-q}a_q\|\nabla u\|_{L^\infty}\|\omega\|_{H^1},$$
for some $0\leq a_q\in \ell^2({\mathbb N})$ with $\sum a_q^2\leq 1$.
 Let $\Delta_q(ab)=\Delta_q T_a b+\Delta_q T_b a+\Delta_qR(a,b)$, where
 $$\Delta_q R(a ,b)=\sum_{q'>q-3} \Delta_q(\Delta_{q'} a\tilde\Delta_{q'}b),$$
 and
 $$\Delta_q T_a b=\sum_{|q-q'|\leq 4} \Delta_q(S_{q'-1}b \Delta_{q'}a).$$
  We have  $\Delta_q(\omega \nabla u)= \Delta_q T_{\omega} \nabla  u+\Delta_q T_{\nabla  u} \omega +   \Delta_q R(\nabla  u, \omega)$. Using a Bernstein type inequality we have
 $$\|S_{q'-1}\omega\|_{L^\infty}\leq C 2^{2q'}\|\nabla u\|_{L^\infty}$$
 and also
 $$\|\Delta_{q'} \nabla u\|_{L^2}\leq C 2^{-2q'}\sup\limits_{|\alpha|=2}\|\Delta_{q'} \partial^\alpha \nabla u\|_{L^2}\leq C\alpha^{-1}2^{-2q}\|\Delta_{q'}\omega\|_{L^2}.$$
 So, we obtain
 $$\|\Delta_q T_{\omega} \nabla u\|_{L^2}\leq C\|\nabla u\|_{L^\infty}\|\Delta_{q'}\omega\|_{L^2}\leq C2^{-q} a_{q}\|\nabla u\|_{L^\infty}\|\omega\|_{H^1},$$
 where $a_{q}\in\ell^2({\mathbb N})$.
Similarly, we have
$$\|\Delta_q T_{\nabla u}\omega\|_{L^2}\leq C\|\nabla u\|_{L^\infty}\|\Delta_{q'}\omega\|_{L^2}\leq C2^{-q}a_q\|\nabla u\|_{L^\infty}\|\omega\|_{H^1}.$$
Concerning the rest term, we have
\begin{align}\|\Delta_q R({\omega},\nabla u)\|_{L^2}&\leq \sum_{q'>q-3}\|\Delta_{q'}\omega\|_{L^\infty}\|\tilde\Delta_{q'} \nabla u\|_{L^2}\notag\\
&\leq \sum_{q'>q-3}\|\nabla u\|_{L^\infty}\|\Delta_{q'} \omega\|_{L^2}\notag\\
&\leq C\sum_{q'>q-3}2^{-q'}a_{q'}\|\nabla u\|_{L^\infty}\|\omega\|_{H^1}\leq C2^{-q}\tilde a_q\|\nabla
u\|_{L^\infty}\|\omega\|_{H^1}
\end{align}
where $\tilde a_q=\sum_{q'>q-3}2^{-(q'-q)} a_{q'}\in \ell^2({\mathbb N})$. This complete the proof.


\begin{thebibliography}{99}

\bibitem{AM} S.~Alinhac and G.~M\'etivier, {\em Propagation de l'analyticit\'e locale pour les solutions de l'\'equation d'Euler}. Arch.~Rational Mech.~Anal.~{\bf 92} (1986),  no.~4, 287--296.


\bibitem{BaVi}  A.V.~Babin and M.I.~Vishik, {\em Attractors of
 Evolutionary Equations}. North--Holland (1989).


\bibitem{BBe} C.~Bardos and S.~Benachour, {\em Domaine d'analycit\'e des solutions de
l'\'equation d'Euler dans un ouvert de~$\mathbb{R}^n$}, Ann.~Scuola
Norm.~Sup.~Pisa Cl.~Sci.~(4)~{\bf 4} (1977), 647--687.

\bibitem{BBeZ} C.~Bardos, S.~Benachour, and M.~Zerner, {\em Analycit\'e des solutions
p\'eriodiques de l'\'equation d'Euler en deux dimensions},
C.~R.~Acad.~Sci.~Paris~{\bf 282} (1976), 995--998.

\bibitem{BTi} C.~Bardos, and E.S.~Titi, {\em Loss of smoothness and energy conserving rough weak solutions for
the 3d Euler equations}. arxiv:0906.2029v1 [math.AP]

\bibitem{Be} S.~Benachour, {\em Analycit\'e des solutions p\'eriodiques de l'\'equation
d'Euler en trois dimension}, C.~R.~Acad.~Sci.~Paris~{\bf 283} (1976), 107--110.

\bibitem{B} J.-M. \textsc{Bony}: Calcul
symbolique et propagation des singularit\'es pour les
 \'equations aux d\'eriv\'ees partielles non lin\'eaires,
\emph{Annales Scientifiques de l'\'Ecole Normale Sup\'erieure},
{\bf  14}, (1981),  209--246.

\bibitem{BrGa} H.~Br\'ezis and T.~Gallouet, {\em Nonlinear Shr\"odinger evolution
  equation}. Nonlinear Anal.~{\bf 4} (1980), no.~4, 677--681.

\bibitem{Titi3}
  Y.~Cao, E. M.~Lunasin and E.S.~Titi, {\em Global well-posedness of
three-dimensional viscous and inviscid simplified Bardina turbulence models}. Commun.~Math.~Sci.~{\bf 4} (2006),  no.~4, 823--848.

\bibitem{Chemin}  J.-Y.~Chemin , {\em Fluides parfaits
incompressibles}. Ast\'{e}risque No.~230 (1995).

\bibitem{Chemin2}
J.-Y. Chemin,
Le syst\`eme de Navier-Stokes incompressible soixante dix
ans apr\`es Jean Leray,
S\'eminaire et Congr\`es, {\bf 9}, 2004,   pages   99--123.

\bibitem{CGP}
J.-Y. Chemin, I. Gallagher, M. Paicu, Global regularity for  some classes of large solutions    to
  the   Navier-Stokes equations, accepted Annals of Mathematics.


\bibitem{CiG} D.~Cioranescu and V.~Girault, {\em Weak and classical solutions of a family of second-grade fluids}.  Internat.~J.~Non-Linear Mech.~{\bf 32}  (1997),  no.~2, 317--335.

\bibitem{CiO} D.~Cioranescu and E.H.~Ouazar, {\em Existence and uniqueness for fluids of second-grade}.  Nonlinear partial differential equations and their applications. Coll\'ege de France seminar, Vol.~VI (Paris, 1982/1983),  178--197, Res.~Notes in Math., 109, Pitman, Boston, MA, 1984.

\bibitem{CJTi97} B.~Cockburn, D.~Jones, and E.S.~Titi, {\em Estimating the number of asymptotic degrees of freedom for nonlinear dissipative systems},  Math.~Comput.~{\bf 66} (1997), 1073--1087.

\bibitem{CF} P.~Constantin and C.~Foias, {\em Navier--Stokes equations}, Chicago Lectures in Mathematics, University of Chicago Press, Chicago IL, 1988.

\bibitem{DM} R.~DiPerna and A.~Majda, {\em Oscillations and concentrations in weak solutions of the incompressible fluid equations}. Comm.~Math.~Phys.~{\bf 108} (1987), no.~4, 667–-689.

\bibitem{DF} J.E.~Dunn, R.L.~Fosdick, {\em Thermodynamics, stability and boundedness
of fluids of complexity 2 and fluids of second-grade}. Arch.~Rational Mech.~Anal.~{\bf 56} (1974), 191--252.

\bibitem{FTi} A.B.~Ferrari and E.S.~Titi, {\em Gevrey regularity for nonlinear analytic parabolicequations}. Comm.~Partial Differential Equations~{\bf 23} (1998),
no.~1--2, 1--16.

\bibitem{Titi1} C.~Foias, D.~Holm, and E.S.~Titi, {\em The three dimensional viscous Camassa-Holm equations and their relation to the Navier-Stokes equations and turbulence theory}, J.~Dynam.~Differential Equations~{\bf 14} (2002), no.~1, 1--35.


\bibitem{FHT2} C.~Foias, D.~Holm, and E.S.~Titi, {\em The Navier-Stokes-alpha model of fluid turbulence. Advances in nonlinear mathematics and science}.  Phys.~D~{\bf 152/153} (2001), 505--519.

\bibitem{FP67} C.~Foias and G.~Prodi, {\em Sur le comportement global des solutions non stationnaires des \'{e}quations de Navier-Stokes en dimension deux}. Rend.~Sem.~Mat.~Univ.~Padova~{\bf 39} (1967), 1--34.

\bibitem{FT} C.~Foias and R.~Temam, {\em Gevrey class regularity for the solutions of the Navier--Stokes equations}.  J.~Funct.~Anal.~{\bf 87} (1989), 359--369.

\bibitem{GaSe} G.P.~Galdi and A.~Sequeira,
{\em Further existence results for classical solutions of the equations of second-grade fluids}, Arch.~Rational Mech.~Anal.~{\bf 128} (1994), 297--312.


\bibitem{GGS}
G.P.~Galdi, M.~Grobbelaar-van Dalsen, and N.~Sauer, {\em Existence and uniqueness of classical-solutions of the equations of motion for
second-grade fluids}, Arch.~Rational Mech.~Anal.~{\bf 124} (1993), 221--237.


\bibitem{GT87}
J.-M. Ghidaglia and R. Temam
{\em Regularity of the solutions of second order evolution equations
and their attractors}. Ann.~Scuola Norm.~Sup.~Pisa Cl.~Sci.~(4)~{\bf 14} (1987),  no.~3, 485--511.

\bibitem{HR03} J.K.~Hale and G.~Raugel, {\em Regularity, determining modes and Galerkin method}. J.~Math.~Pures Appl.~(9)~{\bf 82}  (2003),  no.~9, 1075--1136.


\bibitem{HKR} W.D.~Henshaw, H.-O.~Kreiss, and L.G.~Reyna, {\em Smallest scale estimates for the Navier-Stokes equations for incompressible fluids}. Arch.~Rational Mech.~Anal.~{\bf 112} (1990), no.~1, 21--44.

\bibitem{If02} D.~Iftimie, {\em Remarques sur la limite $ \alpha\to 0$ pour les fluides de grade 2}. C.~R.~Acad.~Sci.~Paris S\'er.~I Math.~{\bf 334}, no.~1, (2002), 83--86.

\bibitem{JTi93} D.~Jones and E.S.~Titi, {\em Upper bounds on the number of determining modes, nodes, and volume elements for the Navier-Stokes equations}. Indiana Math.~J.~{\bf 42} (1993), 875--887.

\bibitem{Ku} I.~Kukavica, {\em On the dissipative scale for the Navier-Stokes equation}. Indiana Univ.~Math.~J.~{\bf 48} (1999),  no.~3, 1057--1081.

\bibitem{KV} I.~Kukavica and V.~Vicol, {\em On the radius of analyticity of solutions to the three-dimensional Euler equations}. Proc.~Amer.~Math.~Soc.~{\bf 137} (2009), 669-677.

\bibitem{KV1} I.~Kukavica and V.~Vicol, {\em The domain of analyticity of solutions to the three-dimensional Euler equations in a half space}. Discrete Contin.~Dyn.~Syst.~Series A, accepted.

\bibitem{LaTiti} A.~Larios and E.S.~Titi, {\em On the Higher-Order Global Regularity of the Inviscid Voigt-Regularization of Three-Dimensional Hydrodynamic Models}. 	 arXiv:0910.3354v1

\bibitem{Lem} P.G.~Lemari\'e--Rieusset, {Une remarque sur l'analyticit\'e des solutions milds des \'equations de Navier--Stokes dans $R\sp 3$}. C.~R.~Acad.~Sci.~Paris S\'er.~I Math.~{\bf 330} (2000),  no.~3, 183--186.

\bibitem{LO} C.D.~Levermore and M.~Oliver, {\em Analyticity of solutions for a generalized Euler equation}. J.~Differential Equations~{\bf 133} (1997), no.~2, 321--339.


\bibitem{LiTiti} J.S.~Linshiz and E.S.~Titi, {\em On the convergence rate of the Euler-$\alpha$, an inviscid second-grade complex fluid, model to the Euler equations}.
    arxiv:0911.1846v1

\bibitem{MB} A.J.~Majda and A.L.~Bertozzi, {\em Vorticity and incompressible flow}. Cambridge Textsin Applied Mathematics, vol.~27,  Cambridge University Press, Cambridge, 2002.

\bibitem{MoRo97} I.~Moise and R.~Rosa, {\em On the regularity of the global attractor
of a weakly damped, forced  Korteweg-de Vries equation}.  Adv.~Differential Equations~{\bf 2}
   (1997), 257--296.

\bibitem{MoRoWa98} I.~Moise, R.~Rosa, and X.~Wang, {\em Attractors for
   non-compact semigroups via energy equations}.
  Nonlinearity~{\bf 11} (1998), 1369--1393.

\bibitem{Sang} V.S.~Ngo, {\em Th\`ese de l'Universit\'e Paris-Sud}, 2009.

\bibitem{Nu73} R.~Nussbaum, {\em Periodic solutions of analytic functional differential equations
are analytic}. Mich.~Math.~J.~{\bf 20} (1973), 249--255.

\bibitem{OTi98} M.~Oliver and E.S.~Titi, {\em Analyticity of the attractor and the number of determining nodes for a weakly damped driven nonlinear Schr\"{o}dinger equation}. Indiana Univ.~Math.~J.~{\bf 47} (1998), 49--73.

\bibitem{OTi} M.~Oliver and E.S.~Titi, {\em On the domain of analyticity of solutions of second order analytic nonlinear differential equations}. J.~Differential Equations~{\bf 174} (2001), no.~1, 55--74.

\bibitem{PRR} M.~Paicu, G.~Raugel, and A.~Rekalo, {\em Regularity of the global attractor and finite-dimensional behaviour for the second grade fluid equations}. J.~Differential Equations, accepted.

\bibitem{PZ} M. Paicu, Z. Zhang, {\em Global Regularity for the Navier-Stokes equations with large, slowly varying initial data in the vertical direction}, preprint, 2009.

\bibitem{Rau} G.~Raugel, {\em Global attractors in partial differential equations}.
Handbook of dynamical systems, Vol.~2, 885--982,  North-Holland, Amsterdam, 2002.

\bibitem{RiEr} R.S.~Rivlin and J.L.~Ericksen, {\em Stress-deformation relations for isotropic materials}. J.~Rational Mech.~Anal.~{\bf 4} (1955), 323--425.


\bibitem{T} R.~Temam, {\em Navier-Stokes Equations, Theory and Numerical Analysis}, Third Edition, North-Holland, 2001.


\end{thebibliography}
\end{document}